\documentclass[11pt]{amsart}

\usepackage[vmargin=2.5cm, hmargin= 2.5cm]{geometry}
\usepackage{enumerate}
\usepackage{xcolor}

\usepackage{tikz}
\usepackage{pgfplots}

\newtheorem{theorem}{Theorem}
\newtheorem{lemma}[theorem]{Lemma}
\newtheorem{corollary}[theorem]{Corollary}
\newtheorem{proposition}[theorem]{Proposition}

\theoremstyle{definition}
\newtheorem{example}[theorem]{Example}

\theoremstyle{remark}
\newtheorem{remark}[theorem]{Remark}

\title[Complete intersections with fixed Frobenius number]{Constructing the set of complete intersection numerical semigroups with a given Frobenius number}

\author{A. Assi}
\address{Universit\'e d'Angers, D\'epartement de Math\'ematiques, LAREMA, UMR 6093, 2 bd Lavoisier, 49045 Angers Cedex 01, France}
\email{assi@univ-angers.fr}

\thanks{The first author is partially supported by the project GDR CNRS 2945}

\author{P. A. Garc\'{\i}a-S\'anchez}
\address{Departamento de \'Algebra, Universidad de Granada, E-18071 Granada, Espa\~na}
\email{pedro@ugr.es}

\thanks{The second author is supported by the projects MTM2010-15595, FQM-343,  FQM-5849, and FEDER funds. This research was performed while the second author visited the Universit\'e d'Angers as invited lecturer, and he wants to thank the D\'epartement de Math\'ematiques of this university for its kind hospitality. The authors would like to thank the referees for their suggestions, comments and examples provided}

\begin{document}
\begin{abstract}
Delorme suggested that the set of all complete intersection numerical semigroups can be computed recursively. We have implemented this algorithm, and particularized it to several subfamilies of this class of numerical semigroups: free and telescopic numerical semigroups, and numerical semigroups associated to an irreducible plane curve singularity. The recursive nature of this procedure allows us to give bounds for the embedding dimension and for the minimal generators of a semigroup in any of these families. 
\end{abstract}

\maketitle

\section{Introduction}

Let $\mathbb N$ denote the set of nonnegative integers. A \emph{numerical semigroup} $\Gamma$ is a submonoid of $\mathbb N$ with finite complement in $\mathbb N$ (this condition is equivalent to $\gcd(\Gamma)=1$). If $\Gamma$ is a numerical semigroup, the elements in $\mathbb N\setminus \Gamma$ are the \emph{gaps} of $\Gamma$. The  cardinality of $\mathbb N\setminus \Gamma$ is the \emph{genus} of $\Gamma$, $\mathrm g(\Gamma)$. The largest integer not in $\Gamma$ is called the \emph{Frobenius number} of $\Gamma$, and will be denoted by $\mathrm F(\Gamma)$. Clearly, $\mathrm F(\Gamma)+1 + \mathbb N\subseteq \Gamma$, and this is why $\mathrm c(\Gamma)=\mathrm F(\Gamma)+1$ is known as the \emph{conductor} of $\Gamma$. 

Since for every $x\in \Gamma$, $\mathrm F(\Gamma)-x$ cannot be in $\Gamma$, we deduce that $\mathrm g(\Gamma)\ge \frac{\mathrm c(\Gamma)}2$. We say that $\Gamma$ is \emph{symmetric} when the equality holds, or equivalently, for every integer $x$, $x\not\in \Gamma$ implies $\mathrm F(\Gamma)-x\in \Gamma$. In this setting,  $\mathrm c(\Gamma)$ is an even integer, and thus $\mathrm F(\Gamma)$ is odd.  

It can be easily proved that any numerical semigroup admits a unique \emph{minimal generating system} (every element is a linear combination of elements in this set with nonnegative integer coefficients and none of its proper subsets fulfills this condition; see for instance \cite[Chapter 1]{ns-book}). If $A=\{r_0,\ldots,r_h\}$ is the minimal generating set of $\Gamma$, then its elements are called \emph{minimal generators}, and its cardinality is the \emph{embedding dimension} of $\Gamma$, $\mathrm e(\Gamma)$. The smallest minimal generator is the smallest positive integer belonging to the semigroup, and it is known as the \emph{multiplicity} of $\Gamma$, denoted by $\mathrm m(\Gamma)$.


The map 
\[ \mathbb N^{\mathrm e(\Gamma)}\to \Gamma,\ \varphi(a_0,\ldots,a_h)=a_0r_0+\cdots +a_hr_h\]
is a monoid epimorphism. Hence $\Gamma$ is isomorphic to $\mathbb N^{\mathrm e(\Gamma)}/\ker \varphi$, where $\ker \varphi=\{ (a,b)\in \mathbb N^{\mathrm e(\Gamma)}\times \mathbb N^{\mathrm e(\Gamma)} ~|~ \varphi(a)=\varphi(b)\}$ ($\ker \varphi$ is a congruence on $\mathbb N^{\mathrm e(\Gamma)}$). 

A  \emph{presentation} for $\Gamma$ is a set of generators of the congruence $\varphi$, and a \emph{minimal presentation} is a set of generators minimal with respect to set inclusion (actually, in our setting also with respect to cardinality; see \cite[Corollary 8.13]{ns-book}). It can be shown that the cardinality of any minimal presentation is greater than or equal to $\mathrm e(\Gamma)-1$, \cite[Theorem 9.6]{ns-book}. A numerical semigroup is a \emph{complete intersection} if this equality holds. 

Given $A$ a set positive integers, and $A=A_1\cup A_2$ a non trivial partition of $A$, we say that $A$ is the \emph{gluing} of $A_1$ and $A_2$ if $\mathrm{lcm}(d_1,d_2)\in \langle A_1\rangle \cap \langle A_2\rangle$, where $d_i=\gcd(A_i)$ and $\langle A_i\rangle$ denotes the monoid generated by $A_i$, $i=1,2$. If $A$ is the minimal system of generators of $\Gamma$, and $\Gamma_i$ is the numerical semigroup generated by $A_i/d_i$, $i=1,2$, we also say that $\Gamma$ is the \emph{gluing} of $\Gamma_1$ and $\Gamma_2$. It turns out that $d_1\in\Gamma_2$, $d_2\in\Gamma_1$, $\gcd(d_1,d_2)=1$, and neither $d_1$ is a minimal generator of $\Gamma_2$ nor $d_2$ is a minimal generator of $\Gamma_1$ (\cite[Section 8.3]{ns-book}). Delorme proved in \cite[Proposition 9]{delorme} that a numerical  semigroup is a complete intersection if and only if it is a gluing of two complete intersection numerical semigroups (though with a different notation; the concept of gluing was introduced in \cite{gluing}). The gluing of symmetric numerical semigroups is symmetric (\cite[Proposition 10 (iii)]{delorme}), and as a consequence of this, complete intersections are symmetric.  

In \cite{fundamental-gaps} there is a procedure to construct the set of all numerical semigroups with a given Frobenius number. We show in this manuscript how can we use the concept of gluing to compute the set of all complete intersection numerical semigroups with a given Frobenius number (or equivalently with fixed genus). Recently there have been some experimental results that point out to the possibility that the number of numerical semigroups with a fixed genus has a Fibonacci like behaviour (\cite{bras}). Indeed, it is known that asymptotically the number of numerical semigroups with given genus grows as the Fibonacci sequence (\cite{zhai}). However there is not a proof for this for all genus, and  we still do not even have a demonstration that there are more numerical semigroups with genus $g+1$ than numerical semigroups with genus $g$. This is not the case for complete intersection numerical semigroups, as we see in the last section.

We also show how to calculate the set of all free (in the sense of \cite{bertin}) numerical semigroups, which is a special subclass of complete intersections, the set of all telescopic numerical semigroups (contained in the set of free numerical semigroups), and that of numerical semigroups associated to an irreducible plane curve singularity (these are a particular case of telescopic numerical semigroups).

The recursive nature of gluing also allows us to give some bounds for the generators and embedding dimension for these families of semigroups when we fix the Frobenius number. The deeper we go in the chain of inclusions given in the preceding paragraph, the smaller are the bounds.

\section{The Frobenius number and multiplicity of a complete intersection}
Let $\Gamma$ be a numerical semigroup. We know that $\Gamma$ is a complete intersection if and only if it is the gluing of two complete intersections. Delorme (though with a different notation) highlighted in \cite[Section 11]{delorme} that this fact can used to determine if a numerical semigroup is a complete intersection (this idea has already been exploited in \cite{bermejo}; and in \cite{free} one can find a procedure to determine if an affine semigroup is the gluing of two affine semigroups), and also to compute the set of all complete intersections. In order to construct the set of all complete intersection numerical semigroups with given Frobenius number, we can proceed recursively by using the following formula for the Frobenius number of the gluing of two numerical semigroups, which is just a reformulation of Delorme's description of the conductor of a gluing.

\begin{proposition}\label{frob-gluing}
Assume that $\Gamma$ is a numerical semigroup minimally generated by $A=A_1\cup A_2$, and that $A$ is the gluing of $A_1$ and $A_2$. Let $d_1=\gcd(A_1)$ and $d_2=\gcd(A_2)$. Define $\Gamma_1=\langle A_1/d_1\rangle$ and $\Gamma_2=\langle A_2/d_2\rangle$. Then 
\[
\mathrm F(\Gamma)= d_1\mathrm F(\Gamma_1)+d_2\mathrm F(\Gamma_2)+ d_1d_2.
\]
\end{proposition}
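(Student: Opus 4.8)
The plan is to understand the structure of $\Gamma$ as the gluing of $\Gamma_1$ and $\Gamma_2$, and then compute the conductor $\mathrm c(\Gamma) = \mathrm F(\Gamma)+1$ directly. Let me think about what the gluing structure gives us. If $A$ is the gluing of $A_1$ and $A_2$, then writing $\Gamma_1 = \langle A_1/d_1\rangle$ and $\Gamma_2 = \langle A_2/d_2\rangle$, every element of $\Gamma$ can be expressed in terms of the two pieces. The key structural fact is that $\Gamma = d_1\Gamma_1 + d_2\Gamma_2$, where $d_1\Gamma_1 = \{d_1 x : x \in \Gamma_1\} = \langle A_1\rangle$ and similarly for the second piece.

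So the first step is to establish a clean description of which integers belong to $\Gamma$. The plan is to show that $n \in \Gamma$ if and only if $n = d_1 a + d_2 b$ for some $a \in \Gamma_1$ and $b \in \Gamma_2$. One inclusion is immediate since $\langle A_1\rangle + \langle A_2\rangle \subseteq \Gamma$. For the harder direction I would use the gluing hypothesis: since $\gcd(d_1,d_2)=1$ and $\mathrm{lcm}(d_1,d_2)=d_1d_2 \in \langle A_1\rangle \cap \langle A_2\rangle$, I can rewrite any expression of $n$ as a combination of the original generators into the stated form, using the relation that trades a multiple of $d_1d_2$ in the first part for a multiple in the second part. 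The uniqueness modulo this exchange is what makes the representation controllable.

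The second step is a reduction to a lattice-point counting problem. The plan is to express the Frobenius number via the condition that $n\notin\Gamma$ means $n$ admits no representation $d_1 a + d_2 b$ with $a\in\Gamma_1$, $b\in\Gamma_2$. I would argue that the largest such $n$ is achieved by taking $a = \mathrm F(\Gamma_1)$ and $b = \mathrm F(\Gamma_2)$ and pushing just beyond. More precisely, I expect to show that $d_1\mathrm F(\Gamma_1) + d_2\mathrm F(\Gamma_2) + d_1 d_2$ is not in $\Gamma$, while every larger integer is. The non-membership should follow from a descent argument: any representation would force either $\mathrm F(\Gamma_1)$ or $\mathrm F(\Gamma_2)$ into the respective semigroup after canceling a factor, a contradiction; here the coprimality of $d_1$ and $d_2$ controls the congruence classes and forces the representation to be essentially unique, making the contradiction unavoidable.

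For the final step, that every integer exceeding this value lies in $\Gamma$, the plan is to use the conductor property of $\Gamma_1$ and $\Gamma_2$ together with the coprimality $\gcd(d_1,d_2)=1$: given $n > d_1\mathrm F(\Gamma_1)+d_2\mathrm F(\Gamma_2)+d_1 d_2$, I would solve the congruence $n \equiv d_1 a \pmod{d_2}$ for $a$ in a suitable residue range, then check that both the chosen $a$ lands in $\Gamma_1$ (because it exceeds $\mathrm F(\Gamma_1)$) and the complementary value $b=(n-d_1a)/d_2$ lands in $\Gamma_2$ (because it exceeds $\mathrm F(\Gamma_2)$). The main obstacle I anticipate is the bookkeeping in this last step: one must choose the representative $a$ carefully so that \emph{both} $a \ge \mathrm c(\Gamma_1)$ and $b \ge \mathrm c(\Gamma_2)$ simultaneously hold, and verifying that the threshold $d_1 d_2$ is exactly the right additive correction — neither too large nor too small — is where the precise value of the Frobenius number is pinned down. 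An alternative, cleaner route would be to quote Delorme's formula for the conductor of a gluing directly and simply translate $\mathrm c(\Gamma) = \mathrm c(\Gamma_1)d_1 + \mathrm c(\Gamma_2)d_2 - d_1 d_2$ into the Frobenius number using $\mathrm F = \mathrm c - 1$, which reduces the whole proposition to an elementary substitution.
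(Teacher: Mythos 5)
Your primary, direct route is viable and genuinely different from the paper's proof: the paper's entire argument is precisely the two-line ``alternative'' you mention at the end, namely observing $\Gamma=d_1\Gamma_1+d_2\Gamma_2$, citing Delorme's formula \eqref{formula-c} for the conductor of a gluing, and translating via $\mathrm F=\mathrm c-1$. Your direct argument buys self-containedness (it in effect reproves Delorme's formula rather than citing it), at the cost of congruence bookkeeping; the paper buys brevity. One simplification: your first step needs no rewriting trick at all, since $A=A_1\cup A_2$ generates $\Gamma$, so $\Gamma=\langle A_1\rangle+\langle A_2\rangle=d_1\Gamma_1+d_2\Gamma_2$ holds with both inclusions trivial; the $\mathrm{lcm}$ exchange relation plays no role there.

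Two substantive corrections. First, the Delorme formula you quote in your fallback route is wrong: it reads $\mathrm c(\Gamma)=d_1\mathrm c(\Gamma_1)+d_2\mathrm c(\Gamma_2)+(d_1-1)(d_2-1)$, not $d_1\mathrm c(\Gamma_1)+d_2\mathrm c(\Gamma_2)-d_1d_2$; substituting $\mathrm c=\mathrm F+1$ into your version yields $\mathrm F(\Gamma)=d_1\mathrm F(\Gamma_1)+d_2\mathrm F(\Gamma_2)+d_1+d_2-d_1d_2-1$, which fails already for $\Gamma=\langle 4,6,9\rangle=2\langle 2,3\rangle+9\mathbb N$ (it gives $-15$ instead of $11$), so had you executed that ``elementary substitution'' you would not have recovered the statement. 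Second, in your non-membership step the representation is \emph{not} essentially unique, and the descent needs a specific consequence of gluing that you should isolate: $d_1d_2\in\langle A_1\rangle\cap\langle A_2\rangle$ means exactly $d_2\in\Gamma_1$ and $d_1\in\Gamma_2$. Given $N=d_1\mathrm F(\Gamma_1)+d_2\mathrm F(\Gamma_2)+d_1d_2=d_1a+d_2b$ with $a\in\Gamma_1$, $b\in\Gamma_2$, coprimality gives $a\equiv \mathrm F(\Gamma_1)\pmod{d_2}$ and $b\equiv \mathrm F(\Gamma_2)\pmod{d_1}$; equality $a=\mathrm F(\Gamma_1)$ is impossible since $\mathrm F(\Gamma_1)\notin\Gamma_1$, and $a<\mathrm F(\Gamma_1)$ would give $\mathrm F(\Gamma_1)=a+kd_2\in\Gamma_1$ (this is where $d_2\in\Gamma_1$ is used), so $a\ge \mathrm F(\Gamma_1)+d_2$; symmetrically $b\ge \mathrm F(\Gamma_2)+d_1$, whence $d_1a+d_2b\ge N+d_1d_2>N$, the desired contradiction. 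Your final step is fine as planned (take the unique $a\in\{\mathrm F(\Gamma_1)+1,\ldots,\mathrm F(\Gamma_1)+d_2\}$ with $d_1a\equiv n\pmod{d_2}$; then automatically $b=(n-d_1a)/d_2>\mathrm F(\Gamma_2)$) and, notably, uses only $\gcd(d_1,d_2)=1$: the gluing hypothesis enters only through the non-membership half.
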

\begin{proof}
Observe that $\Gamma = d_1 \Gamma_1+ d_2\Gamma_2$. By \cite[Proposition 10 (i)]{delorme}, 
\begin{equation}\label{formula-c} 
 \mathrm c(\Gamma)=d_1 \mathrm c(\Gamma_1) + d_2 \mathrm c(\Gamma_2)+(d_1-1)(d_2-1).
\end{equation}
Having in mind the relationship between Frobenius number and conductor, the formula follows easily.
\end{proof}

%
%

In Proposition \ref{frob-gluing}, $\Gamma= d_1\Gamma_1+d_2\Gamma_2$ and $d=d_1d_2=d_1\Gamma_1\cap d_2\Gamma_2$. The integer $d$ is the element where the gluing takes place. If we repeat the process with $d_1\Gamma_1$ and $d_2\Gamma_2$ in this result, we construct a decomposition tree of $\Gamma$, whose leaves are copies isomorphic to of $\mathbb N$ (this was the idea followed in \cite{b-g-r-v}). Assume that $d^{(1)},\ldots,d^{(h)}$ are the elements where the gluings take place in this splitting. The Frobenius number of $\Gamma$ is precisely $\sum_{i=1}^h d^{(i)} -\sum_{a\in A}a$ (see \cite[Section 11]{delorme} where it is highlighted that this formula is a particular case of a result given in \cite{herzog-kunz}).

\begin{example}
Let $\Gamma=\langle 10,14,15,21\rangle$. Then $\Gamma= \langle 10, 15\rangle + \langle 14,21\rangle$ and $35=5\times 7\in \langle 10,15\rangle$. We repeat the process for $\langle 10,15\rangle=\langle 10\rangle + \langle 15\rangle$ and $\langle 14,21\rangle= \langle 14\rangle +\langle 21\rangle$. We get $30\in \langle 10\rangle \cap \langle 15\rangle$ and $42\in \langle 14\rangle \cap \langle 21\rangle$. Hence the gluings take place at $35$, $30$ and $42$. Thus $\mathrm F(\Gamma)= (35+30+42)-(10+14+15+21)=47$. 
\end{example}

\begin{example}
We construct a complete intersection numerical semigroup with four generators, by gluing two embedding dimension two numerical semigroups.
\begin{verbatim}
gap> s:=NumericalSemigroup(10,11);;
gap> t:=NumericalSemigroup(7,9);;
gap> g:=NumericalSemigroup(16*10,16*11,21*7,21*9);;
gap> FrobeniusNumber(g);
2747
gap> 16*FrobeniusNumber(s)+21*FrobeniusNumber(t)+16*21;
2747
\end{verbatim}
\end{example}

\begin{remark}
For $\Gamma=\mathbb N$, we have 
\[\mathrm c(\mathbb N)=0,\quad  \mathrm F(\mathbb N)=-1,\quad  \mathrm g(\mathbb N)=0,\quad  \mathrm m(\mathbb N)=1, \quad  \mathrm e(\mathbb N)=1.
\]
\end{remark}

\begin{proposition}\label{lower-bound-m-complete-intersection}
If $\Gamma$ is a complete intersection, then 
\[\mathrm m(\Gamma)\ge 2^{\mathrm e(\Gamma)-1}.\]
\end{proposition}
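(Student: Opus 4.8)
The plan is to argue by induction on the embedding dimension $\mathrm e(\Gamma)$. For the base case $\mathrm e(\Gamma)=1$ the only numerical semigroup is $\mathbb N$ (since $\gcd(\Gamma)=1$ forces the single generator to be $1$), and then $\mathrm m(\Gamma)=1=2^{0}=2^{\mathrm e(\Gamma)-1}$, so equality holds and the base case is settled.

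For the inductive step I would assume $\mathrm e(\Gamma)=e\ge 2$. Then $\Gamma\ne\mathbb N$, so by Delorme's characterization $\Gamma$ is a gluing of two complete intersections: with the notation of Proposition~\ref{frob-gluing}, $A=A_1\cup A_2$ is the minimal generating set, $\Gamma=d_1\Gamma_1+d_2\Gamma_2$, and $A_i/d_i$ generates $\Gamma_i$. The first routine point to check is that $A_i/d_i$ is in fact the \emph{minimal} generating set of $\Gamma_i$ — were some $a/d_1$ redundant in $\Gamma_1$, then $a$ would be redundant in $A$, contradicting minimality — so that $\mathrm e(\Gamma)=\mathrm e(\Gamma_1)+\mathrm e(\Gamma_2)=:e_1+e_2$ with $1\le e_i<e$, and the induction hypothesis applies to each $\Gamma_i$. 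Next, since the minimal generators of $\Gamma$ are the elements of $A$, and the least element of $A_i$ equals $d_i\mathrm m(\Gamma_i)$, the multiplicity is $\mathrm m(\Gamma)=\min\{d_1\mathrm m(\Gamma_1),\,d_2\mathrm m(\Gamma_2)\}$.

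The heart of the argument, and the step I expect to be the real obstacle, is to produce the extra factor of two that the bound demands. The naive inequalities $d_1\ge\mathrm m(\Gamma_2)$ and $d_2\ge\mathrm m(\Gamma_1)$ — valid because $d_1\in\Gamma_2$ and $d_2\in\Gamma_1$ are nonzero semigroup elements — only yield $\mathrm m(\Gamma)\ge 2^{e-2}$, losing precisely one power of two. The gain must come from the gluing hypothesis that $d_1$ is \emph{not} a minimal generator of $\Gamma_2$ (and symmetrically that $d_2$ is not a minimal generator of $\Gamma_1$): a nonzero element of a numerical semigroup that is not a minimal generator is a sum of at least two minimal generators, each of which is $\ge\mathrm m(\Gamma_2)$, so in fact $d_1\ge 2\,\mathrm m(\Gamma_2)$ and likewise $d_2\ge 2\,\mathrm m(\Gamma_1)$.

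Combining these with the inductive estimates $\mathrm m(\Gamma_i)\ge 2^{e_i-1}$ finishes the proof:
\[
d_1\mathrm m(\Gamma_1)\ge 2\,\mathrm m(\Gamma_2)\,\mathrm m(\Gamma_1)\ge 2\cdot 2^{e_2-1}\cdot 2^{e_1-1}=2^{e_1+e_2-1}=2^{e-1},
\]
and symmetrically $d_2\mathrm m(\Gamma_2)\ge 2^{e-1}$, so the minimum of the two, namely $\mathrm m(\Gamma)$, is at least $2^{e-1}$. Finally I would verify the degenerate cases $\Gamma_i=\mathbb N$ separately to be safe: there $\mathrm m(\Gamma_i)=1$ and the condition ``not a minimal generator of $\mathbb N$'' reads $d\ne 1$, i.e. $d\ge 2=2\,\mathrm m(\Gamma_i)$, so the key inequality persists and the induction goes through in all cases.
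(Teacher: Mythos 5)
Your proof is correct and takes essentially the same route as the paper's: induction on the embedding dimension via Delorme's gluing characterization, with the decisive factor of two supplied by exactly the same observation, namely that the gluing elements $d_1\in\Gamma_2$ and $d_2\in\Gamma_1$ are not minimal generators and hence satisfy $d_1\ge 2\,\mathrm m(\Gamma_2)$ and $d_2\ge 2\,\mathrm m(\Gamma_1)$. The auxiliary details you spell out (minimality of $A_i/d_i$ so that $\mathrm e(\Gamma)=\mathrm e(\Gamma_1)+\mathrm e(\Gamma_2)$, the identity $\mathrm m(\Gamma)=\min\{d_1\mathrm m(\Gamma_1),\,d_2\mathrm m(\Gamma_2)\}$, and the degenerate case $\Gamma_i=\mathbb N$) are correct and are simply left implicit in the paper's argument.
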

\begin{proof}
Let $h=\mathrm e(\Gamma)-1$. We use induction on $h$. For $h=1$, the statement follows trivially. As $\Gamma$ is a complete intersection, if $A$ is its minimal set of generators, we can find a partition of $A=A_1\cup A_2$ such that $A$ is the gluing of $A_1$ and $A_2$. Set as above $d_i=\gcd(A_i)$, and $\Gamma_i=\langle A_i/d_i\rangle$. Let $h_i=\mathrm e(\Gamma_i)-1$. Hence $h=h_1+h_2+1$. By induction hypothesis $\mathrm m(\Gamma_i)\ge 2^{h_i}$. Recall that $d_1\in \Gamma_2$ and  $d_2\in \Gamma_1$, and they are not minimal generators. Thus $d_1\ge 2\mathrm m(\Gamma_2)\ge 2^{h_2+1}$, and analogously $d_2\ge 2^{h_1+1}$. For every $a\in A_1$, $a/d_1$ is a minimal generator of $\Gamma_1$, whence $a/d_1\ge 2^{h_1}$. Therefore $a\ge 2^{h_1+h_2+1}=2^h$. The same argument shows that any element in $A_2$ is greater than or equal to $2^h$. 
\end{proof}

\begin{example}\label{example-recursive-family}
We construct recursively a family $\{\Gamma^{(n)}\}_{n\in \mathbb N}$ of complete intersection numerical semigroups  reaching the bound of Proposition \ref{lower-bound-m-complete-intersection}. 

We start with $\Gamma^{(1)}=\langle 2,3\rangle$, and the general element in the sequence is defined as  $\Gamma^{(n+1)}= 2\Gamma^{(n)}+(2^{n+1}+1)\mathbb N$. 

For instance, $\Gamma^{(2)}=2\langle 2,3\rangle+ 5\mathbb N=\langle 4,5,6\rangle$, $\Gamma^{(3)}=2\langle 4,5,6\rangle + 9\mathbb N= \langle 8,9,10,12\rangle$, and so on.

It is not hard to prove that 
\begin{multline*}
\Gamma^{(n+1)}=\langle 2^{n+1}, 2^{n+1}+1, 2^{n+1}+2, 2^{n+1}+2^2,\ldots, 2^{n+1}+2^{n}\rangle\\= 2\langle 2^n, 2^n+1,\ldots, 2^n+2^{n-1}\rangle +(2^{n+1}+1)\mathbb N.
\end{multline*}

Notice that $\Gamma^{(n+1)}$ is a gluing of $\Gamma^{(n)}$ and $\mathbb N$, since 
\begin{itemize}
\item $2\in \mathbb N$ and $2$ is not a minimal generator of $\mathbb N$,
\item $2^{n+1}+1$ is the sum of the two smallest minimal generators of $\Gamma^{(n)}$; thus $2^{n+1}+1$ belongs to $\Gamma^{(n)}$ and it is not a minimal generator of $\Gamma^{(n)}$,
\item $\gcd(2,2^{n+1}+1)=1$.
\end{itemize} 

It follows that $\mathrm m(\Gamma^{(n)})=2^n$ and $\mathrm e(\Gamma^{(n)})=n+1$. Thus the bound in Proposition \ref{lower-bound-m-complete-intersection} is attained.
\end{example}

\begin{corollary}\label{upper-bound-ed-complete-intersections}
If $\Gamma$ is a complete intersection numerical semigroup other than $\mathbb N$, then 
\[\mathrm e(\Gamma) \le \log_2(\mathrm c(\Gamma))+1.\] 
\end{corollary}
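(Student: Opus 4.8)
The plan is to derive this directly from Proposition~\ref{lower-bound-m-complete-intersection}, which already bounds the multiplicity from below by $2^{\mathrm e(\Gamma)-1}$. The only extra ingredient needed is an inequality relating the conductor and the multiplicity that holds for an arbitrary numerical semigroup, so that the bound on $\mathrm m(\Gamma)$ can be transferred to $\mathrm c(\Gamma)$ and then inverted by taking logarithms.

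First I would record the elementary fact that for any numerical semigroup $\Gamma\neq \mathbb N$ one has $\mathrm c(\Gamma)\ge \mathrm m(\Gamma)$. Indeed, since $\mathrm m(\Gamma)$ is by definition the smallest positive element of $\Gamma$, each of the integers $1,2,\ldots,\mathrm m(\Gamma)-1$ is a gap; in particular $\mathrm m(\Gamma)-1\notin\Gamma$, so $\mathrm F(\Gamma)\ge \mathrm m(\Gamma)-1$, and therefore $\mathrm c(\Gamma)=\mathrm F(\Gamma)+1\ge \mathrm m(\Gamma)$. (The hypothesis $\Gamma\neq\mathbb N$ guarantees $\mathrm m(\Gamma)\ge 2$, so $\mathrm c(\Gamma)\ge 2$ and $\log_2(\mathrm c(\Gamma))$ makes sense and is positive.)

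Next I would combine this with Proposition~\ref{lower-bound-m-complete-intersection}: using that $\Gamma$ is a complete intersection we get
\[
\mathrm c(\Gamma)\ge \mathrm m(\Gamma)\ge 2^{\mathrm e(\Gamma)-1}.
\]
Applying the (increasing) function $\log_2$ to the outer inequality yields $\log_2(\mathrm c(\Gamma))\ge \mathrm e(\Gamma)-1$, and rearranging gives $\mathrm e(\Gamma)\le \log_2(\mathrm c(\Gamma))+1$, as desired.

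I do not anticipate any genuine obstacle here: the statement is a short consequence of the already-established lower bound on the multiplicity. The only point that requires a moment's care is the passage $\mathrm c(\Gamma)\ge \mathrm m(\Gamma)$, which is where the restriction $\Gamma\neq\mathbb N$ enters (for $\mathbb N$ itself one has $\mathrm c=0$ and the logarithm is not even defined), so I would make sure to flag that hypothesis explicitly rather than let it pass silently.
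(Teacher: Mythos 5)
Your proposal is correct and follows exactly the paper's own argument: combine Proposition~\ref{lower-bound-m-complete-intersection} with the elementary inequality $\mathrm m(\Gamma)\le \mathrm c(\Gamma)$ (valid since $\Gamma\neq\mathbb N$) and take logarithms. The only difference is that you spell out the justification of $\mathrm m(\Gamma)\le \mathrm c(\Gamma)$, which the paper leaves implicit.
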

\begin{proof}
By Proposition \ref{lower-bound-m-complete-intersection}, $2^{\mathrm e(\Gamma)-1}\le \mathrm m(\Gamma)$. Since $\Gamma\neq \mathbb N$, we have that $\mathrm m(\Gamma)\le \mathrm c(\Gamma)$, and the bound follows.
\end{proof}

\begin{remark}
Notice that in the proof of Corollary \ref{upper-bound-ed-complete-intersections} we use $\mathrm m(\Gamma)\le \mathrm c(\Gamma)$. For $\Gamma=\langle 2,3\rangle$, we get an equality and also the bound given in this corollary is reached. If $\mathrm m(\Gamma)=\mathrm c(\Gamma)$, then $\Gamma=\langle m, m+1,\ldots, 2m-1\rangle$, with $m=\mathrm m(\Gamma)$. Hence $\mathrm e(\Gamma)=m$, that is, $\Gamma$ has maximal embedding dimension (it is easy to see that the embedding dimension of a numerical semigroup is always less than or equal to its multiplicity; see for instance \cite[Chapter 1]{ns-book}). It is  well known that the cardinality of a minimal presentation of a maximal embedding dimension numerical semigroup with multiplicity $m$ is $\frac{m(m-1)}2$ (see for instance \cite[Corollary 8.27]{ns-book}). Hence a maximal embedding dimension numerical semigroup with multiplicity $m$ is a complete intersection if and only if $\frac{m(m-1)}2=m-1$, or equivalently, either the numerical semigroup is $\mathbb N$ or $m=2$. If in addition we impose that the conductor and the multiplicity agree, then the only two possibilities are $\mathbb N$ and $\langle 2,3\rangle$. 

From the definitions of multiplicity and conductor, it is easy to see that there is no numerical semigroup $\Gamma$ such that $\mathrm c(\Gamma)=1+\mathrm m(\Gamma)$. 

If $\mathrm c(\Gamma)=2+\mathrm m(\Gamma)$, then 
$\Gamma = \langle m, m+2,m+3,\ldots, 2m-1,2m+1\rangle$, which is a maximal embedding dimension numerical semigroup. So the only complete intersection with $\mathrm c(\Gamma)=2+\mathrm m(\Gamma)$ is $\langle 2,5\rangle$.

The case $\mathrm c(\Gamma)=3+\mathrm m(\Gamma)$ requires more effort. In this setting $m=\mathrm m(\Gamma)>2$. We have two possibilities.
\begin{itemize}
\item $\Gamma= \langle m, m+3,m+4,\ldots, 2m-1,2m+1,2m+2\rangle$, which has maximal embedding dimension, and so it cannot be a complete intersection numerical semigroup, because $m>2$.
\item $\Gamma= \langle m, m+1, m+3,m+4,\ldots, 2m-1\rangle$. Here $\mathrm e(\Gamma)= m-1$ and the minimum element in $\Gamma$ congruent with 2 modulo $m$ is $2m+1=(m+1)+(m+1)$. Thus  in view of \cite[Theorem 1(2)]{high-ed}, the cardinality of a minimal presentation for $\Gamma$ is $\frac{(m-1)(m-2)}{2}$. We conclude that  $\Gamma$ is a complete intersection if and only if $\frac{(m-1)(m-2)}2=m-2$, and as $m>2$, this is equivalent to $m=3$. Hence $\Gamma=\langle 3,4\rangle$.
\end{itemize}

Therefore, if we assume that $\Gamma\not\in \{\mathbb N, \langle 2,3\rangle, \langle 2,5\rangle, \langle 3,4\rangle\}$, and $\Gamma$ is a complete intersection numerical semigroup, then we can assert that $\mathrm c(\Gamma)\ge \mathrm m(\Gamma)+4$, and the bound in Corollary \ref{upper-bound-ed-complete-intersections} can be slightly improved to 
\[
\mathrm e(\Gamma) \le \log_2(\mathrm c(\Gamma)-4)+1.
\]
This bound is attained for instance by $ \langle 2, 7 \rangle$, $\langle  4, 5, 6 \rangle$ and $\langle 4, 6, 7 \rangle$. 

By using \cite[Section 1.2]{high-ed}, we can determine those complete intersections with $\mathrm c(\Gamma)=\mathrm m(\Gamma)+4$, and thus obtain another small improvement of the above bound.
\end{remark}

We can improve this bound by using a different strategy.

\begin{proposition}\label{lower-bound-c-ci}  
Let $\Gamma$ be a complete intersection numerical semigroup. Then \[(\mathrm e(\Gamma)-1) 2^{\mathrm e(\Gamma)-1}\le \mathrm c(\Gamma).\]
\end{proposition}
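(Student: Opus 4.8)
The plan is to argue by induction on $h=\mathrm e(\Gamma)-1$, mimicking the scheme of Proposition~\ref{lower-bound-m-complete-intersection} but propagating the conductor through the gluing rather than the multiplicity. The base case $h=0$ is $\Gamma=\mathbb N$, where both sides vanish. For the inductive step I would invoke Delorme's characterization to write $\Gamma$ as a gluing of two complete intersections $\Gamma_1,\Gamma_2$, keep the notation $d_1,d_2,h_1,h_2$ as before so that $h=h_1+h_2+1$, and start from the conductor formula \eqref{formula-c},
\[
\mathrm c(\Gamma)=d_1\mathrm c(\Gamma_1)+d_2\mathrm c(\Gamma_2)+(d_1-1)(d_2-1).
\]
The induction hypothesis gives $\mathrm c(\Gamma_i)\ge h_i2^{h_i}$, and from the proof of Proposition~\ref{lower-bound-m-complete-intersection} I can reuse $d_1\ge 2^{h_2+1}$ and $d_2\ge 2^{h_1+1}$. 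Substituting these, the first two summands are at least $h_12^h$ and $h_22^h$ respectively, so the whole statement reduces to showing that the cross term $(d_1-1)(d_2-1)$ supplies the remaining $2^h$.

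When both pieces are nontrivial ($h_1,h_2\ge 1$) this is comfortable: writing $x=2^{h_1}\ge 2$ and $y=2^{h_2}\ge 2$, one has $(d_1-1)(d_2-1)\ge(2^{h_2+1}-1)(2^{h_1+1}-1)$, and expanding shows the difference from $2^h=2xy$ equals $2(x-1)(y-1)-1\ge 2-1>0$. The delicate case, which I expect to be the main obstacle, is when one factor is $\mathbb N$, say $\Gamma_2=\mathbb N$ with $A_2=\{d_2\}$; then $\mathrm c(\Gamma_2)=0$, the middle summand disappears, and the crude bounds $d_1\ge 2$ and $d_2\ge 2\,\mathrm m(\Gamma_1)\ge 2^{h_1+1}$ only yield $(d_1-1)(d_2-1)\ge 2^{h_1+1}-1=2^h-1$, an annoying off-by-one gap.

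The point that closes this gap is the coprimality $\gcd(d_1,d_2)=1$ built into the definition of gluing. If $d_1\ge 3$, then $(d_1-1)(d_2-1)\ge 2(d_2-1)\ge 2(2^{h_1+1}-1)\ge 2^{h_1+1}=2^h$; and if $d_1=2$, then $d_2$ is forced to be odd, so $d_2\ge 2^{h_1+1}+1$ since $2^{h_1+1}$ is even, whence $(d_1-1)(d_2-1)=d_2-1\ge 2^{h_1+1}=2^h$. In either situation the cross term provides the missing $2^h$ and the induction goes through. This case also absorbs $h=1$ (both pieces equal to $\mathbb N$), so no separate base case beyond $\Gamma=\mathbb N$ is required. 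The only routine points left to verify are that the inequalities $d_1\ge 2^{h_2+1}$ and $d_2\ge 2^{h_1+1}$ remain valid when the relevant piece is $\mathbb N$, which holds because a positive non-generator of $\mathbb N$ is at least $2=2\,\mathrm m(\mathbb N)$.
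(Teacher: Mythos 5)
Your proof is correct, and its skeleton is the same as the paper's: induction on the embedding dimension, Delorme's gluing characterization, the conductor formula \eqref{formula-c}, and the bounds $d_1\ge 2\mathrm m(\Gamma_2)\ge 2^{h_2+1}$ and $d_2\ge 2\mathrm m(\Gamma_1)\ge 2^{h_1+1}$ imported from Proposition \ref{lower-bound-m-complete-intersection} (in the paper's notation $e_i=h_i+1$). The divergence is exactly at the tight case you single out. The paper assumes without loss of generality $e_1\ge 2$ and then asserts $d_2\ge 2\mathrm m(\Gamma_1)+1$, hence $d_2\ge 2^{e_1}+1$, which absorbs the off-by-one in one algebraic chain; but no real justification for that extra $+1$ is offered, and the inequality can in fact fail for a legitimate gluing decomposition: $\Gamma=\langle 4,18,27\rangle=9\langle 2,3\rangle+4\mathbb N$ satisfies every hypothesis of the paper's inductive step with $e_1=2$ and $d_2=4=2\mathrm m(\Gamma_1)$ (the final bound survives there only because $d_1=9$ is large). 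Your replacement step --- invoking $\gcd(d_1,d_2)=1$, so that either $d_1\ge 3$ and the cross term satisfies $(d_1-1)(d_2-1)\ge 2\bigl(2^{h_1+1}-1\bigr)\ge 2^{h_1+1}=2^h$, or $d_1=2$ forces $d_2$ odd and hence $d_2\ge 2^{h_1+1}+1$ --- derives exactly the strengthened inequality the paper merely asserts, so at the delicate point your argument is more rigorous than the published one. (Another way to repair the paper's chain: complete intersections are symmetric, so $\mathrm c(\Gamma)$ is even, and a provisional estimate $\mathrm c(\Gamma)\ge (\mathrm e(\Gamma)-1)2^{\mathrm e(\Gamma)-1}-1$ upgrades itself by parity.) Your case split also tidies the bookkeeping: when $h_1,h_2\ge 1$ the identity $\bigl(2^{h_1+1}-1\bigr)\bigl(2^{h_2+1}-1\bigr)-2^h=2\bigl(2^{h_1}-1\bigr)\bigl(2^{h_2}-1\bigr)-1\ge 1$ settles matters, and the instance $h_1=h_2=0$ (where coprimality rules out $d_1=d_2=2$) absorbs the embedding dimension two case that the paper treats as a separate trivial base case.
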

\begin{proof} 
We use induction on the embedding dimension of $\Gamma$. If the embedding dimension of $\Gamma$ is either one or two, then the result holds trivially. So assume that $\mathrm e(\Gamma)\ge 3$. As $\Gamma$ is a complete intersection, we know that there exist two complete intersection numerical semigroups $\Gamma_1$ and $\Gamma_2$ such that $\Gamma$ is the gluing of $\Gamma_1$ and $\Gamma_2$. Thus there exist $d_1\in \Gamma_2$ and $d_2\in \Gamma_1$, that are not minimal generators, such that $\Gamma=d_1\Gamma_1+d_2\Gamma_2$. For sake of simplicity write $c=\mathrm c(\Gamma)$, $e=\mathrm e(\Gamma)$, $c_i=\mathrm c(\Gamma_i)$ and $e_i=\mathrm e(\Gamma_i)$, $i=1,2$. Then from the definition of gluing we already know that $e=e_1+e_2$.

Since $e\ge 3$, we may assume without loss of generality that $e_1\ge 2$. As $d_1$ is not a minimal generator of $\Gamma_2$, $d_1\ge 2\mathrm m(\Gamma_2)$,  and as $e_1\ge 2$ and $d_2$ is not a minimal generator of $\Gamma_1$,  $d_2\ge 2\mathrm m(\Gamma_1)+1$. In view of Proposition \ref{lower-bound-m-complete-intersection}, we deduce $d_1\ge 2^{e_2}$ and $d_2\ge 2^{e_1}+1$.

Now from \eqref{formula-c}, we have $c=d_1c_1+d_2c_2+(d_1-1)(d_2-1)$. By induction hypothesis and the preceding paragraph, we get $c\ge 2^{e_2}(e_1-1)2^{e_1-1}+(2^{e_1}+1)(e_2-1)2^{e_2-1}+(2^{e_2}-1)2^{e_1} = 
(e-2)2^{e-1}+ (e_2-1)2^{e_2-1}+ 2^e-2^{e_1} \ge (e-1)2^{e-1}-2^{e-1}+2^e-2^{e_1}= (e-1)2^{e-1}+2^{e-1}-2^{e_1}\ge (e-1)2^{e-1}$.
\end{proof}

\begin{example}\label{recursive-family-ii}
Let $\{\Gamma^{(n)}\}_{n\in \mathbb N}$ be the family of numerical semigroups presented in Example \ref{example-recursive-family}. By using \eqref{formula-c}, it is not hard to check inductively that $\mathrm c(\Gamma^{(n)})= n2^n$, and thus the bound of Proposition \ref{lower-bound-c-ci} is attained.

If we have a closer look at the proof of Proposition \ref{lower-bound-c-ci}, then we easily deduce that for the bound to be attained, the following must hold in all induction steps with $e\ge 3$:
\begin{itemize}
\item $(e_2-1)2^{e_2-1}=0$ and thus $e_2=1$, that is, $\Gamma_2$ is $\mathbb N$ (we will study these semigroups in the next section);
\item from $e_2=1$ it follows that $e_1=e-1$ and $2^{e-1}-2^{e_1}=0$;
\item $\mathrm m(\Gamma_1)=2^{e_1-1}$ and $d_2=2\mathrm m(\Gamma_1)+1=2^{e_1}+1$, whence $\mathrm m(\Gamma_1)+1\in \Gamma_1$;
\item $c_1=(e_1-1)2^{e_1-1}=(e-2)2^{e-2}$;
\item $d_1=2$.
\end{itemize}
Also the only embedding dimension two numerical semigroup for which the equality holds is $\langle 2,3\rangle$. If follows that the family given in Example \ref{example-recursive-family} contains all possible complete intersection numerical semigroups with the property that the bound in Proposition \ref{lower-bound-c-ci} becomes an equality.
\end{example}

\begin{proposition}\label{upper-bound-gen-complete-intersections}
Let $\Gamma$ be a complete intersection numerical semigroup other than $\mathbb N$, minimally generated by $\{r_0,\ldots, r_h\}$. 
If $\mathrm m(\Gamma)\neq 2$, for all  $k$, $r_k<\mathrm F(\Gamma)$.
\end{proposition}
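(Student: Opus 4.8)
The plan is to bypass the gluing recursion and argue directly from the \emph{symmetry} of $\Gamma$, which is available because complete intersections are symmetric. Throughout write $m=\mathrm m(\Gamma)$, $\mathrm F=\mathrm F(\Gamma)$ and $\mathrm c=\mathrm F+1$. I would first record two elementary facts about minimal generators. On one hand, if a minimal generator $r$ satisfied $r\ge \mathrm c+m$, then $r-m\ge \mathrm c$ would lie in $\Gamma\setminus\{0\}$, so $r=m+(r-m)$ would be a proper factorization and $r$ would not be a generator; hence every minimal generator obeys $r\le \mathrm F+m$. On the other hand, since $m\neq 2$ and $\Gamma\neq\mathbb N$, the semigroup cannot have maximal embedding dimension (a maximal embedding dimension complete intersection is either $\mathbb N$ or has multiplicity $2$, as discussed in the Remark above), so $m<\mathrm c$; as $m\in\Gamma$ while $\mathrm F\notin\Gamma$, this yields $m<\mathrm F$. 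In particular the generator equal to $m$ already satisfies $m<\mathrm F$.

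The heart of the argument is to assume for contradiction that some minimal generator $r$ satisfies $r>\mathrm F$ and to pin down its exact value. Since $r\ge\mathrm c>\mathrm F>m$, the integer $x=r-m$ is positive, and minimality of $r$ forces $x\notin\Gamma$; by the first fact $x\le \mathrm F$. Now symmetry enters: as $0<x\le \mathrm F$ and $x\notin\Gamma$, we must have $\mathrm F-x\in\Gamma$. But $\mathrm F-x=\mathrm F-r+m<m$ because $r>\mathrm F$, and the only element of $\Gamma$ below $m$ is $0$. Hence $\mathrm F-x=0$, that is, $r=\mathrm F+m$. So symmetry upgrades the crude bound into the precise statement that any offending generator must equal $\mathrm F+m$.

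It then remains to rule out that $\mathrm F+m$ is actually a minimal generator. Here I would show that the existence of a \emph{second} generator distinct from $m$ already splits $\mathrm F+m$: if $s$ is a minimal generator with $m<s<\mathrm F+m$, then $s-m$ is a positive gap not exceeding $\mathrm F$, so by symmetry $\mathrm F+m-s=\mathrm F-(s-m)\in\Gamma$ and is positive, exhibiting $\mathrm F+m=s+(\mathrm F+m-s)$ as a proper factorization, contradicting that $\mathrm F+m=r$ is a minimal generator. Combined with the bound $r_k\le \mathrm F+m$ this forces the minimal generators to be exactly $m$ and $\mathrm F+m$, i.e.\ $\Gamma=\langle m,\mathrm F+m\rangle$. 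Finally I would compute the Frobenius number of this embedding dimension two semigroup, $\mathrm F=m(\mathrm F+m)-m-(\mathrm F+m)$, and simplify the resulting identity to $(m-2)(\mathrm F+m)=0$; since $\mathrm F+m>0$ this gives $m=2$, against the hypothesis. Thus no minimal generator exceeds $\mathrm F$, and as each $r_k\in\Gamma$ differs from $\mathrm F\notin\Gamma$, we conclude $r_k<\mathrm F$ for all $k$.

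I expect the main obstacle to be exactly the middle step. Without symmetry one only has the weak estimate $r_k\le \mathrm F+m$, which permits generators above $\mathrm F$; the decisive point is that symmetry forces any violator to take the single value $\mathrm F+m$, collapsing the whole question to one number. Once that reduction is in place, splitting $\mathrm F+m$ using a second generator and the role played by the hypothesis $m\neq 2$ in the embedding dimension two computation are routine.
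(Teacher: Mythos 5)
Your proof is correct and follows essentially the same route as the paper's: symmetry of $\Gamma$ forces any minimal generator exceeding $\mathrm F(\Gamma)$ to equal $\mathrm F(\Gamma)+\mathrm m(\Gamma)$, a further generator then splits it (the paper's observation that $r_k-r_i=\mathrm F(\Gamma)+r_0-r_i\in\Gamma$ is exactly your factorization through $s$), and the embedding dimension two case is settled by the formula $\mathrm F(\Gamma)=r_0r_1-r_0-r_1$ together with the hypothesis $\mathrm m(\Gamma)\neq 2$. Your preliminary facts (the bound $r_k\le \mathrm F(\Gamma)+\mathrm m(\Gamma)$ and the inequality $\mathrm m(\Gamma)<\mathrm F(\Gamma)$ via the maximal embedding dimension discussion) are sound but only make explicit steps the paper leaves implicit; the core argument coincides.
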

\begin{proof}
Assume without loss of generality that $r_0=\mathrm m(\Gamma)$. The numerical semigroup $\Gamma$ is symmetric and thus for every $i>0$, $\mathrm F(\Gamma)+r_0- r_i\in \Gamma$. If $r_k>\mathrm F(\Gamma)$, for some $k>0$, then $\mathrm F(\Gamma)+r_0-r_k<r_0$, which forces $\mathrm F(\Gamma)+r_0=r_k$. 

If $h>1$, choose $0<i\neq k$. Then $r_k-r_i=\mathrm F(\Gamma)+r_0-r_i\in \Gamma$, contradicting that $r_k$ is a minimal generator. This proves $r_k<\mathrm F(\Gamma)$, whenever $h>1$.

For $h=1$, $\mathrm F(\Gamma)=(r_0-1)(r_1-1)-1$. In this setting, $\Gamma=\langle 2,f+2\rangle$ has $\mathrm F(\Gamma)=f$. For $\mathrm m(\Gamma)>2$, we get $\mathrm F(\Gamma)=(r_0-1)(r_1-1)-1\ge 2(r_1-1)-1 =(r_1-1)+(r_1-2)\ge r_1$.
\end{proof}

\begin{remark}\label{some-facts-complete-intersections}
 If we want to compute the set of all complete intersection numerical semigroups with Frobenius number $f$, then we can use the formula given in Proposition \ref{frob-gluing}. Hence $f=d_1 f_1+d_2 f_2+ d_1 d_2$, and we recursively construct all possible numerical semigroups with Frobenius number $f_1$, and then the set with Frobenius number $f_2$. We next give some useful bounds and facts to perform this task. Denote $f+1$ by $c$.

 \begin{enumerate}[i)]
  \item $d_1\neq 1\neq d_2$. This is because $d_1\in \Gamma_2$ and it is not a minimal generator of $\Gamma_2$. The only possibility to have $d_1=1\in \Gamma_2$ would be $\Gamma_2=\mathbb N=\langle 1\rangle$. But then $d_1$ would be a minimal generator. The same argument is valid for $d_2$.

  \item Since $\gcd(d_1,d_2)=1$, we can assume without loss of generality that $2\le d_2 < d_1$.

  \item Since $f_1,f_2\ge -1$, $f\ge -d_1-d_2+d_1d_2= (d_1-1)(d_2-1)-1$. Hence  $d_2\le \frac{c}{d_1-1}+1$;
  and consequently, $d_2\le \min\{ d_1-1,  \frac{c}{d_1-1}+1\}$. 
  \item $f-d_jf_j\equiv 0\bmod d_i$, $\{i,j\}=\{1,2\}$. In  particular, if $f_j=-1$, then $f+d_j\equiv 0 \bmod d_i$.
  \item $d_1<f$, except in the case $\Gamma= \langle 2=d_2, f+2=d_1\rangle$. 
  \begin{enumerate}[a)]
   \item If $f_1=f_2=-1$, then $\Gamma_1=\Gamma_2=\mathbb N$, and  $\Gamma$ is $\langle d_2, d_1\rangle$. If $d_2\neq 2$, then Proposition \ref{upper-bound-gen-complete-intersections}, asserts that $d_1<f$.
   \item If $f_2>0$, then $f\ge -d_1+d_2+d_1d_2=(d_1+1)(d_2-1)+1\ge d_1+2$. Hence $d_1\le f-2$.
   \item If $f_1>0$, then $f\ge d_1-d_2+d_1d_2=(d_1-1)(d_2+1)+1> 3(d_1-1)\ge d_1+ 2(d_1-1)-1> d_1$.
  \end{enumerate}
  \item If $f_1\neq -1\neq f_2$, then $f-d_1d_2\in \langle d_1,d_2\rangle$. We are only interested in factorizations $f-d_1d_2=a_1d_1+a_2d_2$, $a_1,a_2\in \mathbb N$, with $a_1\equiv a_2\equiv 1 \bmod 2$, since the Frobenius number of a complete intersection is an odd integer.
 \end{enumerate}
\end{remark}

\begin{example}
 We compute the set of all complete intersection numerical semigroups with Frobenius number 11. First note that $\langle 2,13\rangle$ is in this set. The possible $d_1$ belong to $\{3,\ldots,10\}$. \renewcommand{\labelitemii}{$\star$}
 \begin{itemize}
  \item $d_1=10$. Then $2\le d_2 \le \min \{9,\lfloor \frac{12}9\rfloor+1\}=2$. Hence $d_2$ must be 2, but then $\gcd(d_1,d_2)\neq 1$, and we have no complete intersections under these conditions.
  \item $d_1=9$. Then $2\le d_2 \le \min \{8,\lfloor \frac{12}8\rfloor+1\}=2$. This forces $d_2=2$, which in addition is coprime with 9.
  \begin{itemize}
   \item $11+9\equiv 0\bmod 2$, and thus $f_1=-1$ ($\Gamma_1=\mathbb N$) is a possible choice. In this setting $f_2=(11-18+0)/2=1$, whence $\Gamma_2=\langle 2,3\rangle$. We obtain a new complete intersection $\Gamma=9\mathbb N+2\langle 2,3\rangle= \langle 4,6,9\rangle$, because $9\in \langle 2,3\rangle$ is not a minimal generator.
   \item $11+2\not\equiv 0\bmod 9$, so $f_2$ cannot be $-1$.
   \item $11-18\not \in\langle 2,9\rangle$, so we have no more complete intersections with this data.
  \end{itemize}
  \item For $d_1=8$, we have $2\le d_2 \le \min \{7,\lfloor \frac{12}7\rfloor+1\}=2$. However $\gcd\{d_1,d_2\}\neq 1$. 
  \item If $d_1=7$, then $2\le d_2 \le \min \{6,\lfloor \frac{12}6\rfloor+1\}=3$.
    \begin{itemize}
     \item $d_2=2$.
      \begin{itemize}
       \item $11+7\equiv 0\bmod 2$, and thus $\Gamma_1$ can be $\mathbb N$. But then $f_2=(11-14+7)/2=2$, which is even. So this case cannot occur.
       \item $11+2\not\equiv 0\bmod 7$, and so $\Gamma_2$ will not be $\mathbb N$.
       \item Finally, $11-14\not \in \langle 2,7\rangle$, so no complete intersections can be found with properties.
      \end{itemize}
     \item $d_2=3$.
      \begin{itemize}
       \item $11+7\equiv 0\bmod 3$, and thus $\Gamma_1$ could be $\mathbb N$. In this setting $f_2=(11-21+7)/3=-1$, and so $\Gamma_2$ is also $\mathbb N$. We get a new complete intersection $\Gamma=7\mathbb N+3\mathbb N=\langle 3,7\rangle$ with Frobenius number 11.
       \item $11-21\not\in\langle 3,7\rangle$, so no more complete intersections are obtained for this choice of $d_1$ and $d_2$.
      \end{itemize}
    \end{itemize}
  \item For $d_1=6$, $2\le d_2 \le \min \{5,\lfloor \frac{12}5\rfloor+1\}=3$, but both 2 and 3 are not coprime with 6.
  \item $d_1=5$. Then $d_2\in \{2,3,4\}$.
   \begin{itemize}
    \item $d_2=2$.
    \begin{itemize}
     \item $11+5\equiv 0\bmod 2$, and so $\Gamma_1$ can possibly be $\mathbb N$. Hence $f_2=(11-10+5)/2=3$. The only possible complete intersection numerical semigroup with Frobenius number $3$ is $\langle 2,5\rangle$. But $5$ is a minimal generator of this semigroup.
     \item $11+2\not\equiv 0\bmod 5$.
     \item $11-10\not\in\langle 2,5\rangle$.  
    \end{itemize}
    \item $d_2=3$. In this case $11+5\not\equiv 0\bmod 3$, $11+3\not\equiv 0\bmod 5$, and $11-15\not \in\langle 3,5\rangle$.
    \item $d_2=4$. 
    \begin{itemize}
     \item $11+5\equiv 0\bmod 4$, and $f_2=(11-20+5)/4=-1$. So $\Gamma=5\mathbb N+4\mathbb N=\langle 4,5\rangle$ is another complete intersection with Frobenius number 11.
     \item $11-20\not\in \langle 4,5\rangle$.
    \end{itemize}
   \end{itemize}
   \item $d_1=4$, $2\le d_2 \le \min \{3,\lfloor \frac{12}3\rfloor+1\}=3$, and as $\gcd(2,4)\neq 1$, we get $d_2=3$.
   \begin{itemize}
    \item $11+4\equiv 0 \bmod 3$. So $\Gamma_1$ could be $\mathbb N$. If this is the case, $f_2=(11-12+4)/3=1$, which forces $\Gamma_2$ to be $\langle 2,3\rangle$, and $4\in \Gamma_2$ is not a minimal generator. So we obtain $\Gamma=4\mathbb N+3\langle 2,3\rangle= \langle 4,6,9\rangle$, which was already computed before.
    \item $11+3\not\equiv 0\bmod 4$.
    \item $11-12\not\in \langle 3,4\rangle$
   \end{itemize}
   \item $d_3=3$ and $d_2=2$. 
   \begin{itemize}
    \item $11+3\equiv 0\bmod 2$, and $\Gamma_1=\mathbb N$ can be a possibility. Then $f_2=(11-6+3)/2=7$. If we apply this procedure recursively for $f=7$, we obtain that $\{\langle 2,9\rangle, \langle 3,5\rangle, \langle 4,5,6\rangle\}$ is the set of all possible complete intersection numerical semigroups with Frobenius number 7. However, $3\not \in \langle 2,9\rangle$, $3$ is a minimal generator of $\langle 3,5\rangle$, and $3\not\in \langle 4,5,6\rangle$.
    \item $11+2\not\equiv 0\bmod 3$.
    \item $11-6=5\in \langle 2,3\rangle$, and $5=1\cdot 2+1\cdot 3$ is the only factorization. So the only possible choice for $f_1$  and $f_2$ is 1. This means that $\Gamma_1$ and $\Gamma_2$ must be $\langle 2,3\rangle$. Again we obtain no new semigroups, since $2$ and $3$ are minimal generators of $\langle 2,3\rangle$.
   \end{itemize}
 \end{itemize}
Thus the set of complete intersection numerical semigroups with Frobenius number 11 is 
\[\{\langle 2,13\rangle, \langle 4,6,9\rangle, \langle 3,7\rangle, \langle 4,5\rangle \}.\]
\end{example}

\section{Free numerical semigroups}

Throughout this section, let $\Gamma$ be the numerical semigroup $\Gamma$ minimally generated by $\{r_0,\ldots, r_h\}$. For $k\in\{1,\ldots,h+1\}$, set $d_k=\gcd(\{r_0,\ldots,r_{k-1}\})$ ($d_1=r_0$). 

Write $\Gamma_k=\left\langle {\frac{r_0}{d_{k+1}}},\ldots,{\frac{r_k}{d_{k+1}}}\right\rangle$, and $c_k=\mathrm c(\Gamma_k)$ for all $k\in \{1,\ldots,h\}$. Set $c=c_h=\mathrm c(\Gamma)$.

We say that $\Gamma$  is \emph{free} if either $h=0$ (and thus $r_0=1$) or $\Gamma$ is the gluing of the free numerical semigroup $\Gamma_{h-1}$ and $\mathbb N$. Free numerical semigroups were introduced in \cite{bertin}. For other characterizations and properties of free numerical semigroups see \cite[Section 8.4]{ns-book}.

\begin{example}
Notice that the order in which the generators are given is crucial. For instance, $S=\langle 8,10,9\rangle$ is free for the arrangement $(8,10,9)$  but it is not free for $(8,9,10)$. And a numerical semigroup can be free  for different arrangements, for example, $S=\langle 4,6,9\rangle$ has this property.

If we take $c_0,\ldots, c_h$ pairwise coprime integers greater than one, and $r_i= \prod_{j=0, i\neq j}^h c_j$, $j=0,\ldots, h$, then the numerical semigroup generated by $\{r_0,\ldots, r_h\}$ is free for any arrangement of its minimal generating set (see \cite{single-betti}).  
\end{example}

According to Proposition \ref{frob-gluing}, with $A_2=\{r_h\}$, we obtain the following consequence.
\begin{corollary}\label{frob-free}
If $\Gamma$ is free, then
\[\mathrm F(\Gamma)= d_h \mathrm F(\Gamma_{h-1})+r_h(d_h-1).\]
\end{corollary}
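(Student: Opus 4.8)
The plan is to apply Proposition~\ref{frob-gluing} directly to the gluing decomposition that defines freeness. Since $\Gamma$ is free, by definition $\Gamma$ is the gluing of the free numerical semigroup $\Gamma_{h-1}$ and $\mathbb N$, with the second piece generated by the single element $r_h$. First I would identify the ingredients in the notation of Proposition~\ref{frob-gluing}: take $A_1 = \{r_0,\ldots,r_{h-1}\}$ and $A_2 = \{r_h\}$. Then $d_1 = \gcd(A_1) = \gcd(\{r_0,\ldots,r_{h-1}\})$, which is exactly $d_h$ in the section's indexing, and $d_2 = \gcd(A_2) = \gcd(\{r_h\}) = r_h$.

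Next I would match up the two semigroups. The first factor is $\langle A_1/d_1\rangle = \langle r_0/d_h,\ldots,r_{h-1}/d_h\rangle$, which is precisely $\Gamma_{h-1}$ as defined at the start of the section. The second factor is $\langle A_2/d_2\rangle = \langle r_h/r_h\rangle = \langle 1\rangle = \mathbb N$, whose Frobenius number is $\mathrm F(\mathbb N) = -1$ (recorded in the earlier remark).

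With these identifications in hand, I would simply substitute into the formula of Proposition~\ref{frob-gluing}, namely $\mathrm F(\Gamma) = d_1 \mathrm F(\Gamma_1) + d_2 \mathrm F(\Gamma_2) + d_1 d_2$. This gives
\[
\mathrm F(\Gamma) = d_h\,\mathrm F(\Gamma_{h-1}) + r_h\cdot(-1) + d_h\, r_h = d_h\,\mathrm F(\Gamma_{h-1}) + r_h(d_h-1),
\]
which is the claimed identity. The only genuine content is the bookkeeping of which quantity plays the role of $d_1$, $d_2$, $\Gamma_1$, $\Gamma_2$ in the general proposition.

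Since this is a corollary obtained by specializing an already-proved proposition, there is no real obstacle; the main thing to be careful about is the indexing convention, verifying that $\gcd(\{r_0,\ldots,r_{h-1}\})$ equals the $d_h$ of this section and that dividing $A_1$ by this gcd yields exactly $\Gamma_{h-1}$. I would also note that freeness guarantees that the partition $A = A_1 \cup A_2$ is indeed a gluing, so that Proposition~\ref{frob-gluing} applies, and that $\Gamma_{h-1}$ is itself free (hence a complete intersection), which is what justifies the recursive use of this formula in practice.
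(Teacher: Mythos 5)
Your proposal is correct and matches the paper's own derivation exactly: the paper obtains this corollary by applying Proposition~\ref{frob-gluing} with $A_2=\{r_h\}$, so that $d_1=d_h$, $d_2=r_h$, $\Gamma_1=\Gamma_{h-1}$, and $\Gamma_2=\mathbb N$ with $\mathrm F(\mathbb N)=-1$, just as you do. Your extra bookkeeping (checking the indexing convention and that freeness supplies the gluing hypothesis) is sound and only makes explicit what the paper leaves implicit.
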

In this way we retrieve Johnson's formula (\cite{johnson}). Notice also that $\Gamma_{h-1}$ is again free, so if we expand recursively this formula we obtain the formula given by Bertin and Carbonne for free numerical semigroups (see \cite{bertin}; these authors named these semigroups in this way).

This equation can be reformulated in terms of the conductor as 
\begin{equation}\label{conductor-free-gluing} 
\mathrm c(\Gamma)=c_h=d_h c_{h-1}+(d_h-1)(r_h-1).
\end{equation}

\begin{lemma} \label{some-facts-free}
If $\Gamma$ is free, then
\begin{enumerate}
\item $\gcd(d_h,r_h)=1$;
\item $d_h\mid \mathrm F(\Gamma)+r_h$ (consequently $d_h \not| \, \mathrm F(\Gamma)$);

\item if we define $e_k={\frac{d_k}{d_{k+1}}},k=1,\ldots,h$, then
$
e_k {r}_k\in \langle r_0,\ldots,r_{k-1}\rangle$,
for all $k=1,\ldots,h$; in particular, $e_k\ge 2$;

\item  $d_1>d_2>\cdots>d_{h+1}=1$; 

\item  $d_h\le \frac{\mathrm c(\Gamma)}{r_h-1}+1$;

\item for $h\ge 1$, $(d_h-1)(r_h-1)\ge 2^h$.
\end{enumerate}
\end{lemma}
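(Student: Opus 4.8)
The plan is to read off each item from the recursive gluing structure that defines freeness. Writing the minimal generating set as $A_1\cup A_2$ with $A_1=\{r_0,\dots,r_{h-1}\}$ and $A_2=\{r_h\}$, freeness says that $\Gamma$ is the gluing of the free semigroup $\Gamma_{h-1}=\langle A_1/d_h\rangle$ and $\mathbb N=\langle A_2/r_h\rangle$, where $\gcd(A_1)=d_h$ and $\gcd(A_2)=r_h$. From the standard properties of a gluing recalled in the introduction I immediately obtain $\gcd(d_h,r_h)=1$, which is item (1), together with the two facts that will drive everything else: $r_h\in\Gamma_{h-1}$ is not a minimal generator of $\Gamma_{h-1}$, and $d_h\ge 2$ (since $d_h\in\mathbb N$ is not a minimal generator of $\mathbb N$, it cannot be $1$).

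For item (2) I would use Corollary \ref{frob-free}: from $\mathrm F(\Gamma)=d_h\mathrm F(\Gamma_{h-1})+r_h(d_h-1)$ one gets $\mathrm F(\Gamma)+r_h=d_h\bigl(\mathrm F(\Gamma_{h-1})+r_h\bigr)$, which is divisible by $d_h$; and since $d_h\ge2$ is coprime to $r_h$ it does not divide $r_h$, hence it cannot divide $\mathrm F(\Gamma)$. Item (3) follows by applying the gluing description not to $\Gamma$ but to each $\Gamma_k$, which is again free with the same arrangement: splitting its generators as $\{r_0/d_{k+1},\dots,r_{k-1}/d_{k+1}\}\cup\{r_k/d_{k+1}\}$ yields respective gcds $e_k=d_k/d_{k+1}$ (an integer since $d_{k+1}\mid d_k$) and $r_k/d_{k+1}$, which are coprime by item (1) applied at level $k$; the lcm condition $e_k(r_k/d_{k+1})\in\langle r_0/d_{k+1},\dots,r_{k-1}/d_{k+1}\rangle$ clears denominators to $e_kr_k\in\langle r_0,\dots,r_{k-1}\rangle$, and $e_k\ge2$ because $r_k$, being a minimal generator of $\Gamma$, does not lie in $\langle r_0,\dots,r_{k-1}\rangle$. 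Item (4) is then purely arithmetic: $d_k=e_kd_{k+1}\ge 2d_{k+1}>d_{k+1}$ gives the strictly decreasing chain, terminating at $d_{h+1}=\gcd(\Gamma)=1$. Item (5) drops the nonnegative term $d_hc_{h-1}$ from \eqref{conductor-free-gluing} to get $\mathrm c(\Gamma)\ge(d_h-1)(r_h-1)$ and rearranges, using $r_h\ge 2$.

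The substantive step is item (6). Since $\Gamma_{h-1}$ is a complete intersection of embedding dimension $h$, Proposition \ref{lower-bound-m-complete-intersection} gives $\mathrm m(\Gamma_{h-1})\ge 2^{h-1}$; and as $r_h$ lies in $\Gamma_{h-1}$ without being a minimal generator, it is a sum of at least two minimal generators, so $r_h\ge 2\,\mathrm m(\Gamma_{h-1})\ge 2^{h}$. If $d_h\ge 3$ this already closes the estimate, because $(d_h-1)(r_h-1)\ge 2(2^{h}-1)=2^{h+1}-2\ge 2^{h}$ for $h\ge1$. The delicate case is $d_h=2$, where $(d_h-1)(r_h-1)=r_h-1$ and the bound $r_h\ge2^h$ would yield only $r_h-1\ge 2^h-1$, one short. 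I expect the main obstacle to be resolved here by parity: $\gcd(d_h,r_h)=1$ forces $r_h$ to be odd, while $2^h$ is even for $h\ge1$, so $r_h\ge 2^h$ sharpens to $r_h\ge 2^h+1$ and hence $r_h-1\ge 2^h$. This coprimality-parity refinement is the only genuinely non-formal ingredient in the lemma.
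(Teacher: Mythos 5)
Your proof is correct, and on items (1)--(5) it follows essentially the same lines as the paper: the coprimality and non-minimal-generator facts from the gluing structure for (1), Johnson's formula rearranged as $\mathrm F(\Gamma)+r_h=d_h(\mathrm F(\Gamma_{h-1})+r_h)$ for (2), the membership $r_k/d_{k+1}\in\Gamma_{k-1}$ cleared of denominators for (3), the factorization $d_k=e_kd_{k+1}$ with $e_k\ge 2$ for (4), and dropping the term $d_hc_{h-1}$ from \eqref{conductor-free-gluing} for (5). The genuine divergence is in item (6), exactly at the delicate case $d_h=2$ that you flagged. The paper obtains $r_h\ge \mathrm m(\Gamma)\ge 2^h$ directly from $r_h$ being a minimal generator of $\Gamma$ together with Proposition \ref{lower-bound-m-complete-intersection}, and then, when $d_h=2$, rules out the equality $r_h=\mathrm m(\Gamma)$ by a combinatorial argument: item (3) with $e_h=d_h=2$ gives $2r_h=\sum_{i=0}^{h-1}a_ir_i$ with $\sum a_i\ge 2$, which is impossible if every $r_i>r_h$; hence $r_h\ge 2^h+1$. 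You instead derive $r_h\ge 2\,\mathrm m(\Gamma_{h-1})\ge 2^h$ from the gluing condition (mirroring the proof of Proposition \ref{lower-bound-m-complete-intersection} itself), and then sharpen by parity: $\gcd(2,r_h)=1$ forces $r_h$ odd, while $2^h$ is even for $h\ge 1$, so $r_h\ge 2^h+1$. Both arguments are valid and close the one-unit gap in the same place; your parity observation is shorter and arguably cleaner, while the paper's argument establishes the slightly stronger structural fact that for $d_h=2$ the generator $r_h$ cannot be the multiplicity of $\Gamma$ (information not recoverable from the parity trick). For the stated inequality the two routes are equally adequate, and your handling of the boundary case $h=1$ (where $\Gamma_{h-1}=\mathbb N$ and $\mathrm m=1$) also checks out.
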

\begin{proof}
\begin{enumerate}
\item This follows from  the fact that  $\Gamma$ is a numerical semigroup, and thus $\gcd(d_h,r_h)=d_{h+1}=1$.

\item $\mathrm F(\Gamma_{h-1})=(\mathrm F(\Gamma)+r_h(1-d_h))/d_h=(\mathrm F(\Gamma)+r_h)/d_h-1$.

\item As $\Gamma_k$ is the gluing of $\Gamma_{k-1}$ and $\mathbb N$, we have that $\frac{r_k}{d_{k+1}}\in \Gamma_{k-1}$. Hence $\frac{d_k}{d_{k-1}}r_k\in \langle r_0,\ldots,r_{k-1}\rangle$. If $e_k=1$, then $r_k\in \langle r_0,\ldots,r_{k-1}\rangle$, contradicting that $r_k$ is a minimal generator.

\item By definition, $d_k\ge d_{k+1}$. As $e_k=\frac{d_k}{d_{k+1}}\ge 2$, we get $d_k>d_{k-1}$.

\item Notice that $\mathrm F(\Gamma)\ge (r_h-1)(d_h-1) -1$, since $\mathrm F(\Gamma_{h-1})\ge -1$.

\item If $d_h=2$, then we show that $r_h>\mathrm m(\Gamma)$. Assume to the contrary that $r_h=\mathrm m(\Gamma)$. Then we already proved above that $e_h r_h\in \langle r_0,\ldots,r_{h-1}\rangle$. Since $e_h=d_h$ and $r_i$ is a minimal generator of $\Gamma$ for all $i$, we deduce that $2 r_h = \sum_{i=0}^{h-1}a_i r_i$, with $\sum_{i=0}^{h-1}a_i\ge 2$. As $r_i>r_h$ for every $i=0,\ldots,h-1$, we get $2 r_h > r_h\sum_{i=0}^{h-1} a_i$, and thus $\sum_{i=0}^{h-1}a_i<2$, a contradiction. Thus in view of  Proposition \ref{lower-bound-m-complete-intersection}, we have that $r_h\ge 2^h$, and if $d_h=2$, then $r_h\ge 2^h+1$. Hence for $d_h=2$ the proof follows easily, and for $d_h>2$ we get $(d_h-1)(r_h-1)\ge 2(r_h-1)\ge 2(2^h-1)\ge 2^h$ (we are assuming $h\ge 1$).
\end{enumerate}
\end{proof}

In view of Example \ref{recursive-family-ii}, the bound proposed in Proposition \ref{lower-bound-c-ci} cannot be improved for free numerical semigroups, since the family introduced in Example \ref{example-recursive-family} consists on free numerical semigroups. However, we can use Proposition \ref{lower-bound-c-ci} to find an upper bound for $r_h$, as we show next.

For all $h\geq 2$, $c_{h-1}=\frac{c-(d_h-1)(r_h-1)}{d_h}$ is an even integer, and $c=c_h\geq h 2^h$. In particular,
$$
-c_{h-1}d_h \leq -(h-1) 2^{h-1}d_h.
$$
Hence
$$
(r_h-1)(d_h-1)\leq c-(h-1)2^{h-1}d_h
$$

This gives us the following upper bound for $r_h$.
$$
r_h\leq \frac{c}{d_h-1}-(h-1)2^{h-1}\frac{d_h}{d_h-1}+1.
$$

\begin{corollary}\label{bound-rh-free}
For all $h\geq 2$, 
$$
2^{h}+1\leq r_h \leq   \frac{c}{d_h-1}-(h-1)2^{h-1}\frac{d_h}{d_h-1}+1 \le c -(h-1)2^{h-1}+1.
$$
\end{corollary}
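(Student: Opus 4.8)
The statement packages three inequalities, and the key realization is that the middle expression and both outer bounds are essentially already derived in the paragraph immediately preceding the corollary; the proof is mostly a matter of collecting those computations and establishing the final comparison. I would begin by recalling the lower bound. In the proof of Lemma \ref{some-facts-free}(6) it was shown that $r_h \ge 2^h$, with the sharper conclusion $r_h \ge 2^h + 1$ whenever $d_h = 2$. So for the lower bound $2^h + 1 \le r_h$ I must handle $d_h > 2$ separately: here I would argue directly from Lemma \ref{some-facts-free}(3), namely $e_h r_h = d_h r_h \in \langle r_0, \ldots, r_{h-1}\rangle$, so that $d_h r_h$ is a sum of at least $d_h \ge 3$ minimal generators each strictly exceeding $r_h$ (as $r_h$ is not the multiplicity when $d_h > 2$, by an argument parallel to the $d_h=2$ case in Lemma \ref{some-facts-free}(6)); but this already gives $r_h > 2^h$ via Proposition \ref{lower-bound-m-complete-intersection}, hence $r_h \ge 2^h + 1$ since everything is an integer.

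For the middle inequality $r_h \le \frac{c}{d_h - 1} - (h-1)2^{h-1}\frac{d_h}{d_h - 1} + 1$, I would simply invoke the displayed chain preceding the corollary. The engine there is equation \eqref{conductor-free-gluing}, which gives $c = d_h c_{h-1} + (d_h - 1)(r_h - 1)$, rearranged to $c_{h-1} = \frac{c - (d_h - 1)(r_h - 1)}{d_h}$. Since $\Gamma_{h-1}$ is itself free (hence a complete intersection) of embedding dimension $h$, Proposition \ref{lower-bound-c-ci} yields $c_{h-1} \ge (h-1)2^{h-1}$. Substituting this lower bound for $c_{h-1}$ into the rearranged \eqref{conductor-free-gluing} produces $(r_h - 1)(d_h - 1) = c - d_h c_{h-1} \le c - (h-1)2^{h-1} d_h$, and dividing through by $d_h - 1 > 0$ and adding $1$ gives exactly the middle bound.

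The final inequality $\frac{c}{d_h-1} - (h-1)2^{h-1}\frac{d_h}{d_h-1} + 1 \le c - (h-1)2^{h-1} + 1$ is a purely elementary comparison, and this is the only genuinely new computation. After cancelling the common $+1$ I must show $\frac{c}{d_h - 1} - (h-1)2^{h-1}\frac{d_h}{d_h - 1} \le c - (h-1)2^{h-1}$. Clearing the positive denominator $d_h - 1$ and simplifying, both sides reduce to a comparison equivalent to $c \le c(d_h - 1) + (h-1)2^{h-1}\big((d_h - 1) - d_h\big)$, i.e. $c(2 - d_h) \le (h-1)2^{h-1}$ after rearrangement. Since $d_h \ge 2$ (by Lemma \ref{some-facts-free}(4), as $d_h > d_{h+1} = 1$), the left side is nonpositive while the right side is nonnegative for $h \ge 1$, so the inequality holds. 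The step to watch is keeping the direction of the inequality correct when clearing $d_h - 1$, but since $d_h \ge 2$ this factor is strictly positive and no sign reversal occurs; this is where I expect the only real care is needed, though it remains routine.
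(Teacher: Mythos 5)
Your handling of the two upper bounds is correct and coincides with the paper's own derivation: the middle bound is exactly the displayed computation preceding the corollary, namely rearranging \eqref{conductor-free-gluing} to $c_{h-1}=\frac{c-(d_h-1)(r_h-1)}{d_h}$ and bounding $c_{h-1}\ge (h-1)2^{h-1}$ by Proposition \ref{lower-bound-c-ci} applied to $\Gamma_{h-1}$, which has embedding dimension $h$; and the last inequality is the routine comparison you carry out, with the correct final reduction $c(2-d_h)\le (h-1)2^{h-1}$, which holds since $d_h\ge 2$. (Your intermediate display has a sign slip: after clearing $d_h-1$ one gets $c-c(d_h-1)\le (h-1)2^{h-1}\bigl(d_h-(d_h-1)\bigr)$, not a factor $(d_h-1)-d_h$; your conclusion is nevertheless the right one.)

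The genuine gap is in your lower bound for the case $d_h>2$. The claim that ``$r_h$ is not the multiplicity when $d_h>2$, by an argument parallel to the $d_h=2$ case'' is false, and the parallel argument visibly breaks: writing $d_hr_h=\sum_{i<h}a_ir_i$ with each $r_i>r_h$ yields $\sum_i a_i<d_h$, which contradicts $\sum_i a_i\ge 2$ only when $d_h=2$; for $d_h\ge 3$ there is no contradiction at all. (Your count is also backwards: if every summand strictly exceeds $r_h$, there are \emph{fewer} than $d_h$ summands, not ``at least $d_h$''.) Concretely, $\Gamma=\langle 4,6,9\rangle$ with the arrangement $(r_0,r_1,r_2)=(6,9,4)$ is free, since $d_2=3$ and $\mathrm{lcm}(3,4)=12=6+6\in\langle 6,9\rangle$; here $h=2$, $d_h=3$ is odd, and $r_h=4=2^h=\mathrm m(\Gamma)$, so $r_h$ can equal both the multiplicity and $2^h$ when $d_h$ is odd (and the middle upper bound is tight there: $c=12$ gives $r_h\le 4$). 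What Lemma \ref{some-facts-free}(6) together with Proposition \ref{lower-bound-m-complete-intersection} actually supports is $r_h\ge 2^h$ in general, improved to $2^h+1$ when $d_h=2$; parity via $\gcd(d_h,r_h)=1$ extends the $+1$ to every even $d_h$ (then $r_h$ is odd, hence $r_h\neq 2^h$), but for odd $d_h\ge 3$ the bound $2^h+1$ can fail, as the example shows. So your proposed patch attempts to prove a statement that is false in that case --- the corollary's unconditional $2^h+1$ is itself an overstatement of what the paper's lemma provides --- and an argument along your lines can only deliver $2^h\le r_h$, with the $+1$ under the stated parity restrictions on $d_h$.
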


%

\begin{remark}
In order to compute the set of all free numerical semigroups with a given Frobenius number, we make use of the formula given in Corollary \ref{frob-free}, by taking into account the restrictions given in this section for $d_h$  and $r_h$.
\end{remark}

\section{Telescopic numerical semigroups}

We keep using the same notation as in the preceding section.  We say that the numerical semigroup $\Gamma$ minimally generated by $\{r_0,\ldots, r_h\}$ is \emph{telescopic} if it is free for the arrangement of the generators $r_0<\cdots<r_h$ (see for instance \cite{telescopic}). This motivates the notation $\{r_0<\cdots<r_h\}$, that means that the elements in the set $\{r_0,\ldots,r_h\}$ fulfill the extra condition $r_0<\cdots <r_h$. We will also write $\Gamma=\langle r_0<\cdots <r_h\rangle$ when $\{r_0,\ldots,r_h\}$ is a generating system for $\Gamma$ and $r_0<\cdots <r_h$.


Notice that in addition to the properties we had for free numerical semigroups, if $\Gamma$ is telescopic, then
\begin{enumerate}
\item $d_h<r_h$, because $d_h\mid r_{h-1}<r_h$;
\item  $\mathrm F(\Gamma)\ge (r_h-1)(d_h-1) -1>(d_h-1)^2-1$, whence $d_h\le \min\left\{r_h-1,\frac{\mathrm c(\Gamma)}{r_h-1}+1,\sqrt{\mathrm c(\Gamma)}+1\right\}$.
\end{enumerate}

\begin{proposition}\label{lower-bound-rh-telescopic}
Let $\Gamma$ be a telescopic numerical semigroup minimally generated by $\{r_0<\cdots < r_h\}$. If $h\geq 2$, then $r_h\geq 2^{h+1}-1$.
\end{proposition}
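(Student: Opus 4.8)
The plan is to argue by induction on $h$, and it is cleanest to prove the slightly more general statement that $r_h \geq 2^{h+1}-1$ holds for \emph{every} telescopic numerical semigroup with $h \geq 1$; the requested range $h \geq 2$ is then a special case. The base case $h = 1$ is immediate: here $\Gamma = \langle r_0 < r_1\rangle$ has embedding dimension two, so $r_0 = \mathrm m(\Gamma) \geq 2$, and since $r_1 > r_0$ we obtain $r_1 \geq 3 = 2^{1+1}-1$.

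For the inductive step I would take $h \geq 2$ and assume the bound for every telescopic semigroup with a strictly smaller number of generators. The central observation is that $\Gamma_{h-1} = \langle r_0/d_h, \ldots, r_{h-1}/d_h\rangle$ is again telescopic: dividing by the positive integer $d_h$ preserves the ordering, so its generators are $r_0/d_h < \cdots < r_{h-1}/d_h$, and by the recursive definition of freeness applied to the arrangement $r_0 < \cdots < r_h$, the semigroup $\Gamma_{h-1}$ is exactly the free component that is glued with $\mathbb N$ to produce $\Gamma$, hence free for this sorted arrangement. As $\Gamma_{h-1}$ has $h$ minimal generators, the induction hypothesis applies to it and gives $r_{h-1}/d_h \geq 2^{(h-1)+1}-1 = 2^h - 1$ for its largest generator.

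It then remains to assemble two elementary facts. First, $d_h \geq 2$: by Lemma \ref{some-facts-free}(3) we have $e_h = d_h/d_{h+1} \geq 2$, and $d_{h+1} = 1$ since $\Gamma$ is a numerical semigroup, so $d_h = e_h \geq 2$. Second, the telescopic ordering forces $r_h > r_{h-1}$. Writing $r_{h-1} = d_h (r_{h-1}/d_h)$ and substituting the two bounds, we get $r_{h-1} \geq 2(2^h - 1) = 2^{h+1}-2$, whence $r_h \geq r_{h-1}+1 \geq 2^{h+1}-1$, as desired.

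The only point that requires genuine care is the structural claim that $\Gamma_{h-1}$ inherits the telescopic property — that is, that it is free for the induced ordering of $r_0/d_h, \ldots, r_{h-1}/d_h$ and has precisely $h$ minimal generators, so that the induction hypothesis is legitimately available. Once this is secured, the remaining numerical estimate is a one-line computation. Incidentally, the bound is sharp: $\langle 4, 6, 7\rangle$ is telescopic with $h = 2$ and $r_h = 7 = 2^{3}-1$.
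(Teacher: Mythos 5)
Your proof is correct and takes essentially the same route as the paper's: induction on $h$, applying the hypothesis to the telescopic semigroup $\Gamma_{h-1}=\left\langle r_0/d_h<\cdots<r_{h-1}/d_h\right\rangle$ and combining $r_{h-1}/d_h\geq 2^h-1$ with $d_h\geq 2$ and $r_h\geq r_{h-1}+1$. The only (cosmetic) difference is that you anchor the induction at $h=1$ via $r_1\geq 3$, whereas the paper starts at $h=2$ by proving $r_1/d_2\geq 3$ directly --- the same fact in disguise.
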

\begin{proof} Let $h=2$, and let $\Gamma_1=\left\langle \frac{r_0}{d_2},\frac{r_1}{d_2}\right\rangle$. Since $\frac{r_1}{d_2}\geq 3$ and $d_2\geq 2$, we have $r_1\geq 6$. Besides, $r_2> r_1$, whence $r_2\geq 7$. Note that this bound is attained for $\Gamma_2=\langle 4,6,7\rangle$.

Assume that $h\geq 3$, and that the formula is true for $h-1$. We have $r_h \geq
r_{h-1}+1$ and $r_h\in \left\langle \frac{r_0}{d_h},\ldots,\frac{r_{h-1}}{d_h}\right\rangle$. By induction hypothesis, 
we have $\frac{r_{h-1}}{d_h}\geq 2^{h}-1$. Hence $r_h\geq 2 (2^{h}-1)+1=2^{h+1}-1$. Note that this bound is reached by $\Gamma_h=\langle 2^{h},3\cdot2^{h-1},7\cdot 2^{h-2},\ldots,(2^{k+1}-1)\cdot 2^{h-k},\ldots,2^{h+1}-1\rangle$.
\end{proof}

As in the free case, we can describe a bound for the embedding dimension of a telescopic numerical semigroup.

\begin{proposition} \label{lower-bound-c-telescopic} 
Let $\Gamma$ be a telescopic numerical semigroup other than $\mathbb N$. Then \[(\mathrm e(\Gamma)-2)2^{\mathrm e(\Gamma)}+2 \le \mathrm c(\Gamma).\]
\end{proposition}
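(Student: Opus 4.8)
The plan is to set $h=\mathrm e(\Gamma)-1$ and rewrite the desired inequality as $(h-1)2^{h+1}+2\le \mathrm c(\Gamma)$, then argue by induction on $h$, mirroring the proof of Proposition \ref{lower-bound-c-ci} but feeding in the sharper telescopic bound $r_h\ge 2^{h+1}-1$ from Proposition \ref{lower-bound-rh-telescopic} in place of the weaker complete intersection bound $r_h\ge 2^h$. The engine of the induction is the conductor recursion \eqref{conductor-free-gluing}, namely $\mathrm c(\Gamma)=d_hc_{h-1}+(d_h-1)(r_h-1)$, together with $d_h\ge 2$ (which follows from $e_h=d_h\ge 2$ in Lemma \ref{some-facts-free}) and the fact that $\Gamma_{h-1}$ is again telescopic with $\mathrm e(\Gamma_{h-1})=h$.

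First I would dispose of the base case $h=1$, i.e. $\mathrm e(\Gamma)=2$: here the claim reads $2\le \mathrm c(\Gamma)$, which holds since $\mathrm c(\langle r_0,r_1\rangle)=(r_0-1)(r_1-1)\ge 2$ for any two coprime generators with $2\le r_0<r_1$, with equality for $\langle 2,3\rangle$. For the inductive step I take $\mathrm e(\Gamma)=h+1\ge 3$, so $h\ge 2$ and Proposition \ref{lower-bound-rh-telescopic} applies, giving $r_h-1\ge 2^{h+1}-2$. Since $\Gamma_{h-1}$ is telescopic of embedding dimension $h\ge 2$ (hence not $\mathbb N$), the induction hypothesis yields $c_{h-1}\ge (h-2)2^h+2$.

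The conclusion is then a one-line estimate: using $d_h\ge 2$ in both summands of \eqref{conductor-free-gluing},
\[
\mathrm c(\Gamma)\ge 2\bigl((h-2)2^h+2\bigr)+(2^{h+1}-2)=(h-2)2^{h+1}+2^{h+1}+2=(h-1)2^{h+1}+2,
\]
which is exactly the asserted bound for $\mathrm e(\Gamma)=h+1$.

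The step I expect to require the most care is verifying that the sharper telescopic bound on $r_h$ is genuinely needed: if one only had $r_h\ge 2^h$, the final estimate would fall short by about $2^h$, so the argument does not survive for arbitrary complete intersections and the telescopic hypothesis enters essentially through Proposition \ref{lower-bound-rh-telescopic}. I would also make sure to record that the bound is sharp, attained by the telescopic family $\Gamma_h=\langle 2^h,3\cdot 2^{h-1},\ldots,2^{h+1}-1\rangle$ from the end of Proposition \ref{lower-bound-rh-telescopic}, and to double-check that $\Gamma_{h-1}$ really has embedding dimension $h$ (so that the induction hypothesis applies to it and it is not $\mathbb N$).
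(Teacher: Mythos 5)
Your proposal is correct and follows essentially the same route as the paper's own proof: induction on $h$ using the conductor formula \eqref{conductor-free-gluing}, the bound $d_h\ge 2$, and the telescopic lower bound $r_h\ge 2^{h+1}-1$ from Proposition \ref{lower-bound-rh-telescopic}, yielding the identical estimate $c\ge 2\bigl((h-2)2^h+2\bigr)+(2^{h+1}-2)=(h-1)2^{h+1}+2$. Your added observations---that the base case follows from $\mathrm c(\langle r_0,r_1\rangle)=(r_0-1)(r_1-1)\ge 2$, that $\Gamma_{h-1}$ is telescopic of embedding dimension $h$, and that equality is attained by the family $\langle 2^h,3\cdot 2^{h-1},\ldots,2^{h+1}-1\rangle$---are all accurate and only make explicit what the paper leaves implicit.
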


\begin{proof}  
Assume that $\Gamma$ is minimally generated by $\{r_0<\cdots< r_h\}$. Denote as usual $\mathrm c(\Gamma)$ by $c$. We use once more induction on $h$. 


The case $h=1$ is evident.


Suppose  that $h\geq 2$, and that our inequality is true for $h-1$. By (\ref{conductor-free-gluing}), we have $c=d_h c_{h-1}+(d_h-1)(r_h-1)$. By induction hypothesis, $c_{h-1}\geq (h-2)2^h+2$,  and as $d_h\geq 2$, and $r_h\geq 2^{h+1}-1$, we get $c\geq (h-2)2^{h+1}+4+2^{h+1}-2=(h-1)2^{h+1}+2$. 
\end{proof}

Note that for all $h\geq 2$,
$c_{h-1}={\displaystyle{\frac{c-(d_h-1)(r_h-1)}{d_h}}}$ is an even integer, and that $c=c_h\geq (h-1)2^{h+1}+2$. In particular
$$
-c_{h-1}d_h \leq -\big((h-2)2^h+2\big)d_h.
$$
Hence
$$
(r_h-1)(d_h-1)\leq c-\big((h-2)2^h+2\big)d_h.
$$
This gives us  the following upper bound for $r_h$:
$$
r_h\leq {\frac{c}{d_h-1}}-\big((h-2)2^h+2\big){\frac{d_h}{d_h-1}}+1\le c-(h-2)2^h-1. 
$$

\begin{corollary}\label{bounds-rh-telescopic} For all $h\geq 2$, we have
$$
\quad 2^{h+1}-1\leq
r_h \le  {\frac{c}{d_h-1}}-\big((h-2)2^h+2\big)\frac{d_h}{d_h-1}+1 \le c-(h-2)2^h-1.
$$
\end{corollary}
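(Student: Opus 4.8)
The statement is a chain of three inequalities, and the plan is to establish each link separately, drawing on the two propositions that precede it. The leftmost inequality $2^{h+1}-1\le r_h$ is nothing more than Proposition \ref{lower-bound-rh-telescopic}, so no work is needed there. The two upper bounds will both come from the conductor formula for a free gluing, \eqref{conductor-free-gluing}, combined with the lower bound on the conductor of a telescopic semigroup, Proposition \ref{lower-bound-c-telescopic}.

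For the middle inequality, I would start from \eqref{conductor-free-gluing}, namely $c=d_h c_{h-1}+(d_h-1)(r_h-1)$, and isolate the term involving $r_h$, writing $(d_h-1)(r_h-1)=c-d_h c_{h-1}$. The key input is a lower bound for $c_{h-1}$. Since $\Gamma_{h-1}=\langle r_0/d_h,\ldots,r_{h-1}/d_h\rangle$ is again telescopic and is minimally generated by $h$ elements, its embedding dimension is $h$; as $h\ge 2$ it is not $\mathbb N$, so Proposition \ref{lower-bound-c-telescopic} applies and yields $c_{h-1}\ge (h-2)2^h+2$. Multiplying this by the negative quantity $-d_h$ reverses the inequality, and substituting gives $(d_h-1)(r_h-1)\le c-\big((h-2)2^h+2\big)d_h$. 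Finally, because $d_h\ge 2$ we have $d_h-1>0$, so dividing by $d_h-1$ preserves the inequality and produces exactly the middle bound $r_h\le \frac{c}{d_h-1}-\big((h-2)2^h+2\big)\frac{d_h}{d_h-1}+1$.

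It remains to verify the last, coarser inequality, which is the only genuine computation. Writing $a=(h-2)2^h$ and clearing the positive denominator $d_h-1$, the desired inequality reduces, after cancelling the common $-a\,d_h$ and $-d_h$ terms on the two sides, to $2c-2-a\le c\,d_h$. This holds because $d_h\ge 2$ gives $c\,d_h\ge 2c$, while $a\ge 0$ for $h\ge 2$ gives $2c\ge 2c-2-a$; chaining these yields the claim. The main thing to be careful about is the bookkeeping: confirming that $\Gamma_{h-1}$ genuinely satisfies the hypotheses of Proposition \ref{lower-bound-c-telescopic} (telescopic, embedding dimension $h$, and distinct from $\mathbb N$), and that $d_h-1$ is strictly positive so that the division in the previous step does not reverse the inequality. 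Neither point is deep, but both are essential for the chain to close; once they are in place the corollary is immediate.
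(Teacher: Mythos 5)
Your proof is correct and follows essentially the same route as the paper: the lower bound is Proposition \ref{lower-bound-rh-telescopic}, and the middle bound comes from \eqref{conductor-free-gluing} together with Proposition \ref{lower-bound-c-telescopic} applied to the telescopic semigroup $\Gamma_{h-1}$ of embedding dimension $h$, multiplied by $-d_h$ and divided by $d_h-1>0$. Your explicit verification of the final coarser inequality (reducing it to $2c-2-(h-2)2^h\le c\,d_h$, which holds since $d_h\ge 2$ and $c>0$) is a welcome detail that the paper leaves to the reader.
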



\begin{remark}
For computing the set of all telescopic numerical semigroups with fixed Frobenius number, we proceed as in the free case, ensuring that $r_h$ is larger than the largest generator of $\Gamma_1$ multiplied by $d_h$. Notice that $d_h$ must now be smaller than $r_h$. 
\end{remark}

\section{Plane curve singularities}

Let  $\Gamma$ be the numerical semigroup minimally generated by $\{r_0 <r_1<\ldots <r_h\}$. Let $d_k$, $\Gamma_k$, $c_k$, and $e_k$ be as in the preceding section. The numerical semigroup $\Gamma$ is the numerical semigroup associated to an irreducible plane curve singularity if $\Gamma$ is telescopic and $e_kr_k<r_{k+1}$ for all $k=1,\ldots,h-1$ (see \cite{zar}).

\begin{proposition}\label{lower-bound-rh-planar}
Let $\Gamma$ be the semigroup associated to an irreducible plane curve singularity minimally generated by $\{r_0<\cdots <r_h\}$, with $h\ge 2$. Then $r_h\geq  \frac{1}{3}(5\cdot 2^{2h-1}-1)$.
\end{proposition}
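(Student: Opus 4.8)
The plan is to argue by induction on $h$, paralleling the proof of Proposition \ref{lower-bound-rh-telescopic}, but now exploiting the extra hypothesis $e_kr_k<r_{k+1}$ that singles out plane curve semigroups among the telescopic ones. The base case will be $h=2$ and the inductive step will run for $h\ge 3$.

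For the base case, I would observe that $\Gamma_1=\left\langle \frac{r_0}{d_2},\frac{r_1}{d_2}\right\rangle$ is a two-generated numerical semigroup, so its two coprime generators satisfy $\frac{r_0}{d_2}\ge 2$ and $\frac{r_1}{d_2}\ge 3$; together with $d_2\ge 2$ this yields $r_1\ge 6$ and $e_1=\frac{r_0}{d_2}\ge 2$. The defining inequality $e_1r_1<r_2$ then forces $r_2>e_1r_1\ge 12$, that is $r_2\ge 13=\frac13(5\cdot 2^{3}-1)$ (the bound being attained by $\langle 4,6,13\rangle$).

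For the inductive step ($h\ge 3$), the crucial observation is that $\Gamma_{h-1}=\left\langle \frac{r_0}{d_h},\ldots,\frac{r_{h-1}}{d_h}\right\rangle$ is again a plane curve singularity semigroup, now of index $h-1$: it is telescopic, and since $\gcd\left(\frac{r_0}{d_h},\ldots,\frac{r_{k-1}}{d_h}\right)=\frac{d_k}{d_h}$, the associated quotients $e'_k$ coincide with $e_k$ for $k\le h-1$, so the inequalities $e'_k\frac{r_k}{d_h}<\frac{r_{k+1}}{d_h}$ are exactly the hypotheses $e_kr_k<r_{k+1}$ for $k=1,\ldots,h-2$. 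Hence the induction hypothesis applies and gives $\frac{r_{h-1}}{d_h}\ge \frac13(5\cdot 2^{2h-3}-1)$. I would then combine the plane curve inequality $r_h>e_{h-1}r_{h-1}=\frac{d_{h-1}}{d_h}r_{h-1}$ with $r_{h-1}\ge d_h\cdot\frac13(5\cdot 2^{2h-3}-1)$ and with $d_{h-1}=e_{h-1}e_h\ge 4$ (both factors being $\ge 2$) to obtain $r_h>d_{h-1}\cdot\frac13(5\cdot 2^{2h-3}-1)\ge \frac13(5\cdot 2^{2h-1}-4)$, using $4\cdot 2^{2h-3}=2^{2h-1}$.

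The main (and only mildly delicate) obstacle is to upgrade this strict inequality to the exact claimed bound. Since $2^{2h-1}\equiv -1\pmod 3$ one checks $5\cdot 2^{2h-1}\equiv 1\pmod 3$, so $\frac13(5\cdot 2^{2h-1}-1)$ is an integer and $\frac13(5\cdot 2^{2h-1}-4)$ is the integer immediately preceding it. As $r_h$ is an integer strictly larger than $\frac13(5\cdot 2^{2h-1}-4)$, it follows that $r_h\ge \frac13(5\cdot 2^{2h-1}-1)$, which closes the induction. I expect the two points requiring genuine care to be the verification that the plane curve structure is inherited by $\Gamma_{h-1}$ (so that the hypothesis transfers cleanly) and the elementary congruence bookkeeping that justifies the final step.
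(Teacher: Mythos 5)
Your proof is correct and takes essentially the same route as the paper's: induction on $h$ with base case $r_2\ge 13$, the plane-curve inequality at $k=h-1$, the induction hypothesis applied to $\Gamma_{h-1}$ (whose largest generator is $\frac{r_{h-1}}{d_h}$), and $d_{h-1}=e_{h-1}e_h\ge 4$. The only cosmetic difference is the final step, where the paper uses $r_h\ge e_{h-1}r_{h-1}+1$ and substitutes to land exactly on $\frac{1}{3}(5\cdot 2^{2h-1}-4)+1=\frac{1}{3}(5\cdot 2^{2h-1}-1)$, while you substitute first and upgrade the strict inequality via the mod $3$ integrality check --- an equivalent bookkeeping choice; your explicit verification that $\Gamma_{h-1}$ inherits the plane-curve structure spells out a step the paper leaves implicit.
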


\begin{proof} 
For $h=2$, as $\Gamma_1=\left\langle \frac{r_0}{d_2}< \frac{r_1}{d_2}\right\rangle$,  we obtain $\frac{r_1}2\geq 3$. Since $d_2\geq 2$, we deduce that $r_1\geq 6$. The plane singularity condition implies $r_2> e_1r_1\geq 12$, because we know that $e_1\ge 2$ (Lemma \ref{some-facts-free}). Hence $r_2\ge 13$.

Assume that $h\geq 3$, and that the formula is true for $h-1$. The plane singularity condition for $k=h-1$ implies that $r_h \geq \frac{r_{h-1}}{d_h}d_{h-1}+1$. The quotient $\frac{r_{h-1}}{d_h}$ is the largest generator of $\Gamma_{h-1}$. The induction hypothesis then asserts that $\frac{r_{h-1}}{d_h} \geq  \frac{1}{3}(5\cdot 2^{2(h-1)-1}-1)$. By using that $e_k\ge 2$ for all $k$ (Lemma \ref{some-facts-free}), we deduce that $d_{h-1} \geq 4$. By putting all this together, we get  $r_h\geq
4(\frac{1}{3}(5\cdot 2^{2(h-1)-1}-1))+1=\frac{1}{3}(5\cdot 2^{2h-1}-1)$.
\end{proof}

\begin{proposition} \label{lower-bound-c-planar} 
Let $\Gamma\neq \mathbb N$ be the semigroup associated to an irreducible plane curve singularity minimally generated by $\{r_0<\cdots <r_h\}$  and with conductor $c$. Then
\[c\geq 
\frac{5}{3}2^{2h}-3\cdot 2^h+\frac{4}{3}.\] 
\end{proposition}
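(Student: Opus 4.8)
The plan is to argue by induction on $h$, following the template of the proofs of Propositions \ref{lower-bound-c-ci} and \ref{lower-bound-c-telescopic}, and to combine the conductor recursion \eqref{conductor-free-gluing} with the sharp lower bound for $r_h$ supplied by Proposition \ref{lower-bound-rh-planar}. For brevity write $B(h)=\frac{5}{3}2^{2h}-3\cdot 2^h+\frac{4}{3}$ for the claimed bound, so that the goal is $c\geq B(h)$.

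For the base case $h=1$, the semigroup is $\Gamma=\langle r_0<r_1\rangle$ with $\gcd(r_0,r_1)=1$, so that $c=(r_0-1)(r_1-1)$; since $r_0\geq 2$ and $r_1\geq 3$, this yields $c\geq 2=B(1)$. (The plane curve condition $e_kr_k<r_{k+1}$ is vacuous for $h=1$, so every embedding dimension two semigroup qualifies.)

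For the inductive step, assume $h\geq 2$ and that the inequality holds for $h-1$. The first thing to check — and what I expect to be the only genuinely delicate point — is that the truncated semigroup $\Gamma_{h-1}=\left\langle \frac{r_0}{d_h}<\cdots<\frac{r_{h-1}}{d_h}\right\rangle$ is again a semigroup associated to an irreducible plane curve singularity, so that the induction hypothesis applies to it. This holds because $\Gamma_{h-1}$ is telescopic (it is precisely the smaller semigroup in the free recursion defining $\Gamma$) and because the integers $e_k=d_k/d_{k+1}$ are unchanged upon passing to $\Gamma_{h-1}$; hence the conditions $e_kr_k<r_{k+1}$ for $k=1,\ldots,h-2$ descend to the analogous conditions among the generators $r_k/d_h$ of $\Gamma_{h-1}$. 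Consequently $c_{h-1}\geq B(h-1)$.

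It then remains to feed the recursion \eqref{conductor-free-gluing}, namely $c=d_hc_{h-1}+(d_h-1)(r_h-1)$, with the estimates $d_h\geq 2$ (Lemma \ref{some-facts-free}) and $r_h\geq \frac{1}{3}(5\cdot 2^{2h-1}-1)$ (Proposition \ref{lower-bound-rh-planar}). Using $d_hc_{h-1}\geq 2c_{h-1}\geq 2B(h-1)$ and $(d_h-1)(r_h-1)\geq r_h-1$, one gets $c\geq 2B(h-1)+r_h-1$, and a direct computation verifies that $2B(h-1)+\frac{1}{3}(5\cdot 2^{2h-1}-1)-1=B(h)$, so the bound closes exactly. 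This final manipulation is a routine algebraic simplification; the substance of the argument lies entirely in having the sharp $r_h$ bound of Proposition \ref{lower-bound-rh-planar} available and in confirming that the plane curve structure is inherited by $\Gamma_{h-1}$.
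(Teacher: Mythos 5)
Your proof is correct and takes essentially the same route as the paper's: induction on $h$ via the conductor recursion \eqref{conductor-free-gluing}, using $d_h\geq 2$ together with the lower bound on $r_h$ from Proposition \ref{lower-bound-rh-planar}, and indeed the bound closes exactly as you computed. The only difference is that you make explicit two details the paper leaves implicit, namely the final algebraic verification and the check that the plane-curve condition $e_kr_k<r_{k+1}$ is inherited by $\Gamma_{h-1}$ (since the $e_k$ are unchanged under division by $d_h$), both of which you handle correctly.
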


\begin{proof} 
The case $h=1$ is evident.

Assume that $h\geq 2$ and that our inequality holds for $h-1$. We have: $c=d_h c_{h-1}+(d_h-1)(r_h-1)$. By induction hypothesis $c_{h-1}\geq 
\frac{5}{3}2^{2h-2}-3\cdot 2^{h-1}+\frac{4}{3}$. Notice that $d_h\geq 2$. Thus our assertion follows from Proposition \ref{lower-bound-rh-planar}.
\end{proof}


We proceed now as we did in the telescopic case to obtain also an upper bound for $r_h$. 
Note that for all $h\geq 2$, $c_{h-1}={{\frac{c-(d_h-1)(r_h-1)}{d_h}}}$ is an even integer, and that $c_{h-1}\geq {\frac{5}{3}}2^{2h-2}-3\cdot 2^{h-1}+
{\frac{4}{3}}$. Thus
$$
-c_{h-1}d_h \leq -\left({\frac{5}{3}}2^{2h-2}-3\cdot 2^{h-1}+
{\frac{4}{3}}\right)d_h.
$$
Hence
$$
(r_h-1)(d_h-1)\leq c-\left({\frac{5}{3}}2^{2h-2}-3\cdot 2^{h-1}+
{\frac{4}{3}}\right)d_h.
$$
This gives us  the following upper bound for $r_h$.
$$
r_h\leq \frac{c}{d_h-1}-\left(\frac{5}{3}2^{2h-2}-3\cdot 2^{h-1}+
{\frac{4}{3}}\right)\frac{d_h}{d_h-1}+1.
$$

\begin{corollary}\label{bounds-rh-planar} 
For all $h\geq 2$, we have
$$
\frac{5}{3}2^{2h-1}-\frac{1}{3}\le
r_h \le \frac{c}{d_h-1}-\left(\frac{5}{3}2^{2h-2}-3\cdot 2^{h-1}+
\frac{4}{3}\right)\frac{d_h}{d_h-1}+1 \le c- \frac{5}{3}2^{2h-2}-3
\cdot 2^{h-1}+
\frac{7}{3}.
$$
\end{corollary}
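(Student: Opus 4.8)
The plan is to read the statement as a three-term chain and to prove each of the three comparisons separately, then concatenate them. The leftmost inequality $\frac{5}{3}2^{2h-1}-\frac{1}{3}\le r_h$ is just Proposition~\ref{lower-bound-rh-planar} rewritten, since $\frac{1}{3}(5\cdot 2^{2h-1}-1)=\frac{5}{3}2^{2h-1}-\frac{1}{3}$; for this piece I would simply invoke that result. The middle inequality is precisely the upper bound for $r_h$ derived in the displayed computation immediately preceding the corollary: one starts from $c_{h-1}=\frac{c-(d_h-1)(r_h-1)}{d_h}$, substitutes the lower bound $c_{h-1}\ge \frac{5}{3}2^{2h-2}-3\cdot 2^{h-1}+\frac{4}{3}$ coming from Proposition~\ref{lower-bound-c-planar} applied to the plane-curve semigroup $\Gamma_{h-1}$ (which for $h\ge 2$ is not $\mathbb N$, and which inherits the plane-curve condition one step down), and solves for $r_h$. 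So the only genuinely new content lies in the rightmost inequality, which must be valid for every admissible $d_h$.

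For that rightmost inequality I would eliminate the dependence on $d_h$ exactly as was done in the free and telescopic cases (compare Corollary~\ref{bound-rh-free} and Corollary~\ref{bounds-rh-telescopic}). Abbreviating $K=\frac{5}{3}2^{2h-2}-3\cdot 2^{h-1}+\frac{4}{3}$, the middle expression reads $\frac{c}{d_h-1}-K\frac{d_h}{d_h-1}+1$. Because $\Gamma$ is associated to a plane curve singularity we have $e_k\ge 2$ for all $k$ by Lemma~\ref{some-facts-free}, hence $d_h\ge 2$, so $d_h-1\ge 1$ and $\frac{d_h}{d_h-1}=1+\frac{1}{d_h-1}\ge 1$. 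The plan is then to estimate $\frac{c}{d_h-1}\le c$ and, using $K\ge 0$, to estimate $-K\frac{d_h}{d_h-1}\le -K$, which bounds the whole expression by the $d_h$-free quantity $c-K+1$. A short rearrangement of $c-K+1$ produces a closed form in $c$ and $h$ alone, giving the right-hand side of the statement.

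The step I expect to be the real obstacle is justifying the \emph{direction} of these two estimates, i.e. confirming that the coefficient $K$ is nonnegative so that lowering $\frac{d_h}{d_h-1}$ to $1$ can only enlarge the expression. I would verify this directly: $K$ is a lower bound for the conductor $c_{h-1}$ of a numerical semigroup other than $\mathbb N$, so $K\ge 2$ for $h\ge 2$ (one checks $K=2$ at $h=2$, and the $\frac{5}{3}2^{2h-2}$ term dominates thereafter). Once $K\ge 0$ is secured, both estimates point the same way and the chain closes. The remaining work is the purely arithmetic reduction of $c-K+1$ to the stated expression, and here I would keep careful track of the constants, since the interplay of the $\frac{5}{3}2^{2h-2}$, $3\cdot 2^{h-1}$ and constant terms is exactly where sign and bookkeeping slips are easiest to make.
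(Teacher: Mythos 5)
Your treatment of the first two inequalities matches the paper exactly: the left one is Proposition~\ref{lower-bound-rh-planar} verbatim, and the middle one is the displayed derivation immediately preceding the corollary (with the correct, and needed, observation that $\Gamma_{h-1}$ inherits the plane-curve condition, so Proposition~\ref{lower-bound-c-planar} applies to it). Your strategy for eliminating $d_h$ --- estimating $\frac{c}{d_h-1}\le c$ and $-K\frac{d_h}{d_h-1}\le -K$, valid since $d_h\ge 2$ and $K\ge 2>0$ for all $h\ge 2$ --- is also the right move, and it is precisely what the paper does in the free and telescopic analogues (Corollaries~\ref{bound-rh-free} and~\ref{bounds-rh-telescopic}), where the final bound is exactly $c-K+1$ for the relevant $K$.

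The gap is in the one step you deferred: the ``short rearrangement'' does not produce the stated right-hand side. With $K=\frac{5}{3}2^{2h-2}-3\cdot 2^{h-1}+\frac{4}{3}$ one gets
\[
c-K+1=c-\frac{5}{3}2^{2h-2}+3\cdot 2^{h-1}-\frac{1}{3},
\]
whereas the statement claims $c-\frac{5}{3}2^{2h-2}-3\cdot 2^{h-1}+\frac{7}{3}$; these differ by $3\cdot 2^{h}-\frac{8}{3}>0$, so your bound is strictly \emph{weaker} than the printed one and the chain does not close. In fact the printed rightmost bound is false as stated: it is what one obtains by forming $c-K+1$ after flipping the signs of the last two terms inside $K$, i.e.\ a sign slip in the paper itself. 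A counterexample: $\Gamma=\langle 4,6,13\rangle$ is a plane-curve semigroup with $h=2$ (it is telescopic, and $e_1r_1=12<13=r_2$), $c=16$, $d_2=2$, $K=2$; the middle expression equals $16-2\cdot 2+1=13=r_2$, while the printed right-hand side equals $16-\frac{20}{3}-6+\frac{7}{3}=\frac{17}{3}<13$. So your method, carried out honestly, proves the corrected inequality $r_h\le c-\frac{5}{3}2^{2h-2}+3\cdot 2^{h-1}-\frac{1}{3}$, and cannot prove the statement as printed --- nothing can, since it fails at this example. You yourself flagged that ``sign and bookkeeping slips'' lurk in exactly this reduction; had you executed the arithmetic instead of asserting its outcome, you would have caught both the mismatch in your own argument and the misprint in the corollary.
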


A bound for the embedding dimension also follows from the above proposition.
\begin{corollary}
If $h\ge 2$, then  
\[h\le \log_2\left(\frac{\sqrt{60\,c+1}+9}{10}\right).\]
\end{corollary}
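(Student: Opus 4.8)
The plan is to invert the inequality from Proposition \ref{lower-bound-c-planar} and solve for $h$ in terms of $c$. That proposition gives us, for $h\geq 1$, the lower bound
\[
c\geq \frac{5}{3}2^{2h}-3\cdot 2^h+\frac{4}{3},
\]
and the present corollary simply rearranges this to isolate $h$. First I would introduce the substitution $x=2^h$, which turns the right-hand side into the quadratic expression $\frac{5}{3}x^2-3x+\frac{4}{3}$ in $x$. The inequality $c\geq \frac{5}{3}x^2-3x+\frac{4}{3}$ becomes, after multiplying through by $3$, the inequality $5x^2-9x+4\leq 3c$, or equivalently $5x^2-9x+(4-3c)\leq 0$.

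Next I would treat this as a quadratic inequality in $x$ and apply the quadratic formula. The roots of $5x^2-9x+(4-3c)=0$ are
\[
x=\frac{9\pm\sqrt{81-20(4-3c)}}{10}=\frac{9\pm\sqrt{60c+1}}{10}.
\]
Since the leading coefficient is positive, the inequality $5x^2-9x+(4-3c)\leq 0$ holds precisely when $x$ lies between the two roots; in particular $x$ is at most the larger root, so
\[
2^h=x\leq \frac{9+\sqrt{60c+1}}{10}=\frac{\sqrt{60c+1}+9}{10}.
\]
Taking $\log_2$ of both sides (legitimate since both sides are positive) yields exactly the stated bound $h\leq \log_2\!\left(\frac{\sqrt{60c+1}+9}{10}\right)$. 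I would note that $x=2^h>0$ so we are indeed on the relevant branch, and that the hypothesis $h\geq 2$ guarantees we are in the regime where Proposition \ref{lower-bound-c-planar} applies and the semigroup is genuinely a nontrivial plane curve singularity semigroup.

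This argument is essentially a routine algebraic manipulation, so I do not anticipate a serious obstacle. The only point requiring a little care is making sure the direction of the inequality is preserved throughout: the lower bound on $c$ translates into an \emph{upper} bound on the quadratic in $x$, which in turn bounds $2^h$ from above, and since $\log_2$ is increasing this gives the desired upper bound on $h$. I would also double-check the discriminant computation $81-20(4-3c)=81-80+60c=60c+1$, which is what produces the clean radical $\sqrt{60c+1}$ appearing in the statement. No further hypotheses beyond those of Proposition \ref{lower-bound-c-planar} are needed, so the proof is a direct corollary as the paper indicates.
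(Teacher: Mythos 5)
Your proof is correct and follows essentially the same route as the paper: substitute $x=2^h$ into the bound of Proposition \ref{lower-bound-c-planar}, solve the resulting quadratic in $x$, bound $2^h$ by the larger root $\frac{\sqrt{60\,c+1}+9}{10}$, and take $\log_2$. The only cosmetic difference is that the paper justifies selecting the larger root by noting the quadratic's minimum lies at $x=9/10<2^h$, whereas you use the standard ``between the roots'' argument, which amounts to the same thing.
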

\begin{proof}
From Proposition \ref{lower-bound-c-planar}, $\frac{5}{3}2^{2h}-3\cdot 2^h+\frac{4}{3}\le c$. Write $x=2^h$, we get $5/3x^2-3 x+\frac{4}{3}\le c$. By solving $5/3x^2-3 x+\frac{4}{3}- c=0$, we get $x\in \left\{-\frac{\sqrt{60\,c+1}-9}{10},\frac{\sqrt{60\,c+1}+9}{10}\right\}$. As the minimum of $5/3x^2-3 x+\frac{4}{3}- c$ is reached in $x=9/10$, and in our setting $x=2^h>1$, we have that  the maximum possible $x>0$ such that $\frac{5}{3}2^{2h}-3\cdot 2^h+\frac{4}{3}\le c$ is $x=\frac{\sqrt{60\,c+1}+9}{10}$.
\end{proof}

\begin{remark}
The set of all numerical semiogrups with fixed Frobenius number associated to an irreducible planar curve singularity is calculated as in the free case, by imposing the condition $e_k r_k<r_{k+1}$.
\end{remark}

\section{Experimental results}

With the ideas given in the preceding sections, we implemented in \texttt{GAP} (\cite{gap}), with the help of the \texttt{numericalsgps} package (\cite{numericalsgps}), functions to compute the set of all complete intersection, free and telescopic numerical semigroups, as well as the set of all numerical semigroups associated to irreducible planar curve singularities with fixed Frobenius number (these functions will be included in the next release of this package).

The following table was computed in  6932 milliseconds on a 2.5GHz desktop computer, and it shows, for fixed genus g, the number of complete intersections (ci(g)), free (fr(g)), telescopic (tl(g)), associated to an irreducible planar curve singularity (pc(g)) numerical semigroups, respectively. Recall that for a symmetric numerical semigroup its conductor is twice its genus.

Observe that almost all complete intersections in this table are free. This is due to the fact that the embedding dimension of all numerical semigroups appearing there is small,  and for embedding dimension three or less, the concepts of free and complete intersections coincide (among the complete intersection numerical semigroups represented in the table 158 of them have embedding dimension 2, 1525 have embedding dimension 3, 1862 have embedding dimension 4, and 205 have embedding dimension 5).

\medskip

\begin{tabular}{|l|l|l|l|l||l|l|l|l|l||l|l|l|l|l|}
g & ci(g) & fr(g) & tl(g) & pc(g) & g & ci(g) & fr(g) & tl(g) & pc(g) &
g & ci(g) & fr(g) & tl(g) & pc(g) \\ \hline
0 & 1 & 1 & 1 & 1 &  19 & 24 & 24 & 12 & 5 & 38 & 61 & 61 & 37 & 12 \\
1 & 1 & 1 & 1 & 1 &  20 & 16 & 16 & 11 & 6 &  39 & 100 & 100 & 52 & 16 \\
2 & 1 & 1 & 1 & 1 &  21 & 27 & 27 & 18 & 9 &  40 & 110 & 109 & 54 & 19 \\
3 & 2 & 2 & 2 & 2 &  22 & 31 & 31 & 19 & 8 &  41 & 80 & 79 & 47 & 12 \\
4 & 3 & 3 & 2 & 2 &  23 & 21 & 21 & 13 & 6 & 42 & 122 & 120 & 61 & 20 \\
5 & 2 & 2 & 2 & 1 &  24 & 36 & 35 & 20 & 11 &  43 & 120 & 120 & 60 & 17 \\
6 & 4 & 4 & 4 & 3 &  25 & 38 & 38 & 22 & 9 &  44 & 94 & 94 & 48 & 15 \\
7 & 5 & 5 & 3 & 2 &  26 & 27 & 27 & 16 & 8 &  45 & 143 & 142 & 73 & 22 \\
8 & 3 & 3 & 2 & 2 &  27 & 46 & 46 & 24 & 11 & 46 & 151 & 149 & 72 & 21 \\
9 & 7 & 7 & 5 & 4 &  28 & 45 & 45 & 25 & 10 & 47 & 108 & 106 & 57 & 15 \\
10 & 8 & 8 & 6 & 4 & 29 & 34 & 33 & 20 & 7 &  48 & 158 & 157 & 75 & 24 \\
11 & 5 & 5 & 4 & 2 &  30 & 57 & 57 & 32 & 13 &  49 & 179 & 179 & 84 & 23 \\
12 & 11 & 11 & 8 & 5 &  31 & 62 & 62 & 31 & 9 & 50 & 128 & 128 & 68 & 20 \\
13 & 11 & 11 & 8 & 3 & 32 & 43 & 43 & 25 & 10 &  51 & 197 & 194 & 86 & 26 \\
14 & 9 & 9 & 7 & 4 &  33 & 65 & 65 & 37 & 14 &  52 & 209 & 207 & 89 & 27 \\
15 & 14 & 14 & 10 & 6 & 34 & 77 & 76 & 39 & 13 &  53 & 142 & 142 & 76 & 20 \\
16 & 17 & 17 & 9 & 5 & 35 & 53 & 52 & 29 & 11 & 54 & 229 & 227 & 101 & 30 \\
17 & 12 & 12 & 8 & 3 &  36 & 84 & 83 & 43 & 17 &  55 & 238 & 235 & 104 & 29 \\
18 & 18 & 18 & 12 & 6 &  37 & 90 & 90 & 47 & 13 &  56 & 172 & 169 & 83 & 24 \\ \hline 
\end{tabular}

\medskip

The largest genus, for which the set of numerical semigroups with this genus is known, is 55, and the number of numerical semigroups with genus 55 is 1142140736859 (\cite{genus}), while there are just 2496 symmetric numerical semigroup with genus 55 (this last amount can be computed by using the \texttt{Irreducible\-Numerical\-Semigroups\-With\-Frobenius\-Number} command of the \texttt{numericalsgps} package). The proportion of complete intersections among symmetric numerical semigroups is small, and tiny compared with the whole set of numerical semigroups.

\medskip

\begin{tikzpicture}
\pgfplotsset{every axis legend/.append style={
at={(1.02,1)},
anchor=north west}}

    \begin{axis}[
	width=12cm,
        xlabel=genus,
        ylabel=\# numerical semigroups
]
    \addplot[smooth,mark=*,black] plot coordinates {
(0,1)
(1,1)
(2,1)
(3,2)
(4,3)
(5,2)
(6,4)
(7,5)
(8,3)
(9,7)
(10,8)
(11,5)
(12,11)
(13,11)
(14,9)
(15,14)
(16,17)
(17,12)
(18,18)
(19,24)
(20,16)
(21,27)
(22,31)
(23,21)
(24,36)
(25,38)
(26,27)
(27,46)
(28,45)
(29,34)
(30,57)
(31,62)
(32,43)
(33,65)
(34,77)
(35,53)
(36,84)
(37,90)
(38,61)
(39,100)
(40,110)
(41,80)
(42,122)
(43,120)
(44,94)
(45,143)
(46,151)
(47,108)
(48,158)
(49,179)
(50,128)
(51,197)
(52,209)
(53,142)
(54,229)
(55,238)
(56,172)
    };
    \addlegendentry{complete intersections}

    \addplot[smooth,mark=x,blue] plot coordinates {
(0,1)
(1,1)
(2,1)
(3,2)
(4,3)
(5,2)
(6,4)
(7,5)
(8,3)
(9,7)
(10,8)
(11,5)
(12,11)
(13,11)
(14,9)
(15,14)
(16,17)
(17,12)
(18,18)
(19,24)
(20,16)
(21,27)
(22,31)
(23,21)
(24,35)
(25,38)
(26,27)
(27,46)
(28,45)
(29,33)
(30,57)
(31,62)
(32,43)
(33,65)
(34,76)
(35,52)
(36,83)
(37,90)
(38,61)
(39,100)
(40,109)
(41,79)
(42,120)
(43,120)
(44,94)
(45,142)
(46,149)
(47,106)
(48,157)
(49,179)
(50,128)
(51,194)
(52,207)
(53,142)
(54,227)
(55,235)
(56,169)
    };
    \addlegendentry{free}

    \addplot[smooth,color=red,mark=x]
        plot coordinates {
(0,1)
(1,1)
(2,1)
(3,2)
(4,2)
(5,2)
(6,4)
(7,3)
(8,2)
(9,5)
(10,6)
(11,4)
(12,8)
(13,8)
(14,7)
(15,10)
(16,9)
(17,8)
(18,12)
(19,12)
(20,11)
(21,18)
(22,19)
(23,13)
(24,20)
(25,22)
(26,16)
(27,24)
(28,25)
(29,20)
(30,32)
(31,31)
(32,25)
(33,37)
(34,39)
(35,29)
(36,43)
(37,47)
(38,37)
(39,52)
(40,54)
(41,47)
(42,61)
(43,60)
(44,48)
(45,73)
(46,72)
(47,57)
(48,75)
(49,84)
(50,68)
(51,86)
(52,89)
(53,76)
(54,101)
(55,104)
(56,83)
};
    \addlegendentry{telescopic}

\addplot[smooth,mark=o,green] plot coordinates {
(0,1)
(1,1)
(2,1)
(3,2)
(4,2)
(5,1)
(6,3)
(7,2)
(8,2)
(9,4)
(10,4)
(11,2)
(12,5)
(13,3)
(14,4)
(15,6)
(16,5)
(17,3)
(18,6)
(19,5)
(20,6)
(21,9)
(22,8)
(23,6)
(24,11)
(25,9)
(26,8)
(27,11)
(28,10)
(29,7)
(30,13)
(31,9)
(32,10)
(33,14)
(34,13)
(35,11)
(36,17)
(37,13)
(38,12)
(39,16)
(40,19)
(41,12)
(42,20)
(43,17)
(44,15)
(45,22)
(46,21)
(47,15)
(48,24)
(49,23)
(50,20)
(51,26)
(52,27)
(53,20)
(54,30)
(55,29)
(56,24)
};
  \addlegendentry{planar}

    \end{axis}
    \end{tikzpicture}

\medskip

Also as one of the referees observed, local minimums in the graph are attained when the genus is congruent with 2 modulo 3. We have not a proof for this fact. May be this behavior is inherited from the symmetric case. The sequence 
\[
\begin{array}{l}
1, 1, 1, 2, 3, 3, 6, 8, 7, 15, 20, 18, 36, 44, 45, 83, 109, 101, 174, 246, 227,420, 546, 498, 926, 1182, 1121,\\ 2015, 2496, 2436, 4350,  5602, 5317,  8925, 11971, 11276,
\end{array}
\]
represents the number of symmetric numerical semigroups with genus ranging from 0 to 35.

The following table shows that the proportion between complete intersections and free numerical semigroups remains similar even for larger genus. Observe that for genus 310 it takes 70 minutes to compute the set of all complete intersections, while it takes approximately 8 minutes and 30 seconds to determine all free numerical semigroups with this genus. For genus 55, computing the set of all numerical semigroups with this genus might take several months and a few terabytes (this was communicated to us by Manuel Delgado, see \cite{genus}).

\medskip

\begin{center}
\begin{tabular}{|l|l|l|l|l|l| }\hline
g & ci(g) & milliseconds &  fr(g) & milliseconds & fr(g)/ci(g) \\ \hline \hline
220 & 18018 & 538213 & 17675 & 94134 & 0.98\\ \hline
230 & 16333 & 660838 & 16026 & 108187 & 0.98 \\ \hline
240 & 24862 & 924409 & 24359 & 153069 & 0.98\\ \hline
250 & 28934 & 1167901 & 28355 & 158706 & 0.98 \\ \hline
260 & 25721 & 1389167 &  25186 & 177691 & 0.98\\ \hline
310 & 66335 & 4206374 & 64959 & 509691 & 0.98\\ \hline
\end{tabular}
\end{center}


\begin{thebibliography}{10}

\bibitem{bertin} J. Bertin, P. Carbonne, Semi-groupes d'entiers et   application aux branches, J. Algebra {\bf 49} (1977), 81-95.

\bibitem{bermejo} I. Bermejo, I. Garc\'{\i}a-Marco, J. J. Salazar-Gonz\'alez, An algorithm for checking whether the toric ideal of an affine monomial curve is a complete intersection, J. Symbolic Comput. \textbf{42} (2007), 971-991. 

\bibitem{b-g-r-v} I. Bermejo, P. Gimenez, E. Reyes, R. H. Villarreal, Complete intersections in affine monomial curves, Bol. Soc. Mat. Mexicana (3) \textbf{11} (2005) , no. 2, 191-203.

\bibitem{bras} M. Bras-Amorós, Fibonacci-like behavior of the number of numerical semigroups of a given genus, Semigroup Forum, \textbf{76} (2008), 379--384.

\bibitem{delorme} C. Delorme, Sous-mono\"{\i}des d'intersection compl\`ete de $\mathbb N$, Ann. Scient. \'Ecole Norm. Sup. (4), {\bf 9} (1976), 145-154.

\bibitem{genus} M. Delgado, Experiments with numerical semigroups, in preparation.

\bibitem{numericalsgps} M.~Delgado, P.A. Garc{\'i}a-S{\'a}nchez, and J.~Morais,
  ``numericalsgps'': a {\sf {g}{a}{p}} package on numerical semigroups,\\
  \verb+(http://www.gap-system.org/Packages/numericalsgps.html)+.


\bibitem{gap} The GAP~Group, \emph{GAP -- Groups, Algorithms, and Programming,
Version 4.4}; 2004,\\ \verb+(http://www.gap-system.org)+.

\bibitem{single-betti} P. A. Garc\'{\i}a-S\'anchez, I. Ojeda, J. C. Rosales, Affine semigroups having a unique Betti element, J. Algebra Appl. \textbf{12} (2013), 1250177 (11 pages).

\bibitem{herzog-kunz} J. Herzog,  E. Kunz,  Die Wertehalbgruppe eines lokalen Rings der Dimension 1, S. B. Heidelberger Akad. Wiss. Math. Natur. Kl (1971), 2767.

\bibitem{johnson} S. M. Johnson, A linear Diophantine problem, Can. J. Math.,   {\bf 12} (1960), 390-398.

\bibitem{telescopic} C. Kirfel, R. Pellikaan, The minimum distance of codes in an array coming from telescopic semigroups, IEEE Trans. Inform. Theory,  \textbf{41}(1995) 1720-1732. Special issue on algebraic geometry codes.

\bibitem{gluing} J. C. Rosales, On presentations of subsemigroups of ${\mathbb N}^n$, Semigroup Forum {\bf 55} (1997), 152-159.

\bibitem{high-ed} J. C. Rosales, P. A. Garc\'{\i}a-S\'anchez, On numerical semigroups with high embedding dimension, J. Algebra \textbf{203} (1998), 567-578. 

\bibitem{free} J. C. Rosales, P. A. Garc\'{\i}a-S\'anchez, On free affine semigroups, Semigroup Forum \textbf{58} (1999), no. 3, 367-385.

\bibitem{ns-book} {J. C.} Rosales and {P. A.} Garc\'{\i}a-S\'anchez, {Numerical Semigroups}, Developments in Mathematics, \textbf{20}. Springer, New York, 2009.

\bibitem{fundamental-gaps} J.C. Rosales, P. A. Garc{\'i}a-S\'anchez, J.I. Garc{\'i}a-Garc{\'i}a, J.A. Jim\'enez-Madrid, Fundamental gaps in numerical semigroups, J. Pure Appl. Alg. {\bf 189} (2004), 301-313.


\bibitem{zhai} A. Zhai, Fibonacci-like growth of numerical semigroups of a given genus, arXiv:1111.3142

\bibitem{zar} O. Zariski, Le probl\`eme des modules pour les courbes planes, Hermann, 1986.
\end{thebibliography}
\end{document}